\newcommand{\rd}{{\mathrm d}}
\newcommand{\re}{{\mathrm e}}
\newcommand{\calH}{{\cal H}}
\newcommand{\calF}{{\cal F}}
\newcommand{\calU}{{\cal U}}
\newcommand{\vcalN}{{\mbox{\boldmath$\cal{N}$}}}
\newcommand{\Pb}{\mathbb{P}}
\newcommand{\Rb}{\mathbb{R}}
\newcommand{\Eb}{\mathbb{E}}
\newtheorem{theorem}{Theorem}[section]
\newtheorem{corollary}{Corollary}[theorem]
\newtheorem{lemma}[theorem]{Lemma}
\newtheorem{remark}{Remark}
\newtheorem{definition}{Definition}[section]
\begin{document}	
	\title{Schr\"odinger Approach to Optimal Control of Large-Size Populations}
	\author{Kaivalya Bakshi, David D. Fan and Evangelos A. Theodorou
	\thanks{Kaivalya Bakshi is a researcher in the Controls and Optimization group, GE Research, Niskayuna, NY 12309 USA  (email: kaivalya.bakshi@ge.com)}% <-this % stops a space
	\thanks{David D. Fan is a PhD candidate in the Department of Electrical and Computer Engineering, Georgia Institute of Technology, Atlanta,
		GA 30332 USA}
	\thanks{Evangelos A. Theodorou is in the Department of Aerospace Engineering, Georgia Institute of Technology, Atlanta,
		GA 30332 USA}% <-this % stops a space
			}
	
	\date{\today}
	
	\maketitle

\begin{abstract}
Large-size populations consisting of a continuum of identical and non-cooperative agents with stochastic dynamics are useful in modeling various biological and engineered systems. This paper addresses the stochastic control problem of designing optimal state-feedback controllers which guarantee the closed-loop stability of the stationary density of such agents with nonlinear Langevin dynamics, under the action of their individual steady state controls. \\
\indent We represent the corresponding coupled forward-backward PDEs as decoupled Schr\"odinger equations, by applying two variable transforms. Spectral properties of the linear Schr\"odinger operator which underlie the stability analysis are used to obtain explicit control design constraints. Our interpretation of the Schr\"odinger potential as the cost function of a closely related optimal control problem motivates a quadrature based algorithm to compute the finite-time optimal control \footnote{Code publicly available at \url{https://github.com/ddfan/pi_quadrature}}.
%We show the deep connection between the nonlinear Schr\"odinger equation and Mean Field Games for agents with nonlinear Langevin dynamics and explicit interactions. 
\end{abstract}

\section{Introduction}
\label{sec:intro}

Dynamics and control of multi-agent populations consisting of a large number of identical and non-cooperative agents are of interest in various applications including robotic swarms, macro-economics, traffic and neuroscience. Prior works on optimal open-loop or closed-loop ensemble (broadcast) control consider several copies of a particular deterministic \cite{Brocket2012} or stochastic (\cite{Brockket2000}, \cite{Milutinovic2011}, \cite{Bakshi_CDC2016}) system and have applications in quantum control \cite{Ruths2011} and neuroscience \cite{Ruths2013}. A standard idea in engineering, economics and biology to model decision making in large-size populations of agents with limited information is regulation using local feedback information. Optimal feedback control applications to {large-size} populations of \textit{small} robots with individual state-feedback controllers have been proposed for inspection of industrial machinery \cite{Correll2009}, centralized control of hybrid automata \cite{Milutinovic2013} and decentralized control of robotic bee swarms for pollinating crops \cite{LiBerman2017}. In large-size multi-agent systems wherein the dimensions of individual agents are small compared to their region of operation, it can be assumed that the agents do not locally interact with each other. Such problems with stochastic agents can therefore be treated within the framework of standard stochastic optimal control.

In this paper we consider the finite and infinite-time nonlinear optimal control problem (OCP) of a density of identical and non-cooperative agents which have individual state-feedback controllers. The distribution of agents (referred to as population in this paper) is represented by the density function of the state of a representative agent. The standard stochastic OCP \cite{Fleming2006} is represented by two coupled forward-backward partial differential equations (PDEs) known as Hamilton-Jacobi-Bellman (HJB) and Fokker-Planck (FP), which govern the value function and agent density respectively. Note that although the FP equation depends on the value function, the HJB equation does not depend on the density. The important question we address in this paper is: how do we design the cost function of the OCP so that the steady state optimal controller stabilizes an initial (perturbed) density of agents to the corresponding steady state density? This question clearly relates to the stability analysis of the FP equation governing the population density.
%We address this question by presenting a stability analysis for a class of stochastic OCPs wherein agents obey nonlinear Langevin dynamics and provide explicit control design constraints required for stability. For the finite time case, we present a computationally efficient quadrature based control algorithm and demonstrate it for a population of agents with nonlinear dynamics.

Mean field games (MFGs) (\cite{Lasry2007}, \cite{Huang2007}) constituted by  game-theoretic OCPs use a similar HJB-FP PDE representation to model continuum systems in which agents \textit{interact} with each other through the dependence of their dynamics or cost function on the population density (\cite{kachroo2016inverse}, \cite{carmona2013mean}, \cite{couillet2012electrical}).
%Stability of fixed points of MFG models, which involves analysis of the forward-backward HJB and FP equations has been analyzed previously (\cite{Gueant2009}, \cite{yinmehmeysha10}, \cite{Nourian2014}). A common limitation of most prior works on this topic is that individual agent dynamics are assumed to be simple integrator systems. On the other hand, mean-field representations of large-scale systems with nonlinear agent mobilities are used to model crowds \cite{Burger2011}, flocks \cite{degond2016phase} and robotic systems \cite{Milutinovic2013}, stochastic gradient descent optimization of neural network parameters \cite{Chaudhari2017DeepRelax} and Brownian particles in non-equilibrium thermodynamics \cite{Salapka2018}. Specifically, nonlinear agent dynamics of overdamped Langevin type appear in several of these works.
In the recent works (\cite{Bakshi2018TCNS}, \cite{grover2018mean}) by some of the authors and in (\cite{Gueant2009}, \cite{yinmehmeysha10}, \cite{Nourian2014}), local (linear) stability results were presented for certain classes of MFGs wherein agents obey nonlinear Langevin dynamics. %The approach in these works was based on exploiting spectral properties of the closed-loop generator of the agent dynamics, which governed the linear perturbation PDEs to analyze forward-backward stability.
The presented work is therefore differentiated from these earlier works since it is concerned with the stability analysis of the population density related to standard stochastic OCPs in which the agents do not interact with each other.

The first contribution of this paper is the application of variable transforms which enable the representation of the HJB and FP PDEs corresponding to a class of stochastic OCPs with nonlinear agent dynamics, as decoupled imaginary-time linear Schrödinger equations.
%The first contribution of this paper is the introduction of novel variable transforms enabling the representation of the coupled forward-backward PDE optimality system of a class of stochastic OCPs with nonlinear agent dynamics as decoupled imaginary-time linear Schr\"odinger equations.
%The history of variable transforms in PDEs, particularly for obtaining linear representations of parabolic equations, dates back to the independent papers by Cole \cite{Cole1950} and Hopf \cite{Hopf1950}.
The fundamental connection between optimal control and quantum mechanics (\cite{Zhou2017},\cite{SchrodChen2016}) has been shown through a standard Cole-Hopf transform of the value function (\cite{Cole1950},\cite{Hopf1950}) to  represent the HJB PDE governing the value function as the Schr\"odinger equation. In the presented work, we transform both the value \textit{and} density functions and represent the HJB and FP equations as decoupled Schr\"odinger equations. The variable transform of the value function in this paper is distinct from prior works including the work in \cite{Ullmo2016} on MFGs of agents with simple integrator dynamics, which utilizes a density transform similar to the what is presented in this paper, since it reflects the nonlinear dynamics of agents with Langevin dynamics.
%The variable transforms introduced in this paper allow us to express the coupled HJB \textit{and} FP PDEs governing the value \textit{and} density functions as decoupled Schr\"odinger equations with identical Schr\"odinger potential.

The second contribution of this work is to show that the  Schr\"odinger potential is the cost function of a closely related (fictitious) OCP subject to simple integrator dynamics. This interpretation is closely related (in the case of uncontrolled stochastic dynamics) to the well known work of Nelson \cite{Nelson1966} which shows the connection between the classical physical interpretation of the Brownian motion and quantum mechanics. 

The main contribution of the paper is the stability analysis of the optimally controlled density of agents for a class of stochastic OCPs, wherein agents obey nonlinear dynamics. We obtain explicit (analytical) \textit{control design constraints} required for stability. This work presents more general \textit{nonlinear stability} results in contrast to the prior related works  (\cite{Bakshi2018TCNS}, \cite{grover2018mean},\cite{Gueant2009}, \cite{yinmehmeysha10}, \cite{Nourian2014}) in the MFG case, which rely on linearization of the PDEs representing the OCP. This is made possible by applying two variable transforms in this paper, which enables representation of the HJB and FP equations of the considered OCP by decoupled linear Schr\"odinger equations. Our work is inspired by the work \cite{Ullmo2019} which analyzes the stability of equilibrium densities of MFGs for agents with simple integrator dynamics. Although our work is limited in contrast to \cite{Ullmo2019} since it does not address the more general MFGs, it is more general since it treats agents with nonlinear Langevin dynamics. Further, the approach and consequent stability analysis presented in \cite{Ullmo2019}, is based on the Gaussian approximation of the equilibrium density. Therefore, although the presented stability analysis in \cite{Ullmo2019} is  applicable to non-Gaussian initial densities which can be approximated by a juxtaposition of several Gaussian bumps, the case of arbitrary non-Gaussian initial densities is beyond it's scope. In contrast, our work treats arbitrary non-Gaussian initial densities.
%In contrast to the  work \cite{Ullmo2016} which treats stability of the equilibrium densities in MFGs under the assumption of small  (local)  density  perturbations and Gaussian equilibrium, the approach proposed in this paper treats general  perturbations of Gaussian or \textit{non-Gaussian  stationary densities} in standard stochastic OCPs. Additionally, the study in \cite{Ullmo2016} is restricted to the case of agents with simple integrator dynamics, while our approach applies to the broader class of OCPs for agents with nonlinear overdamped Langevin dynamics.
%In section \ref{sec:newHopfCole}, we introduce a novel Cole-Hopf type transform in order to obtain a decoupled representation of the coupled HJB-FP optimality system of the considered large-scale OCP. The motivation for this transform is three-fold.

The fourth contribution of our work is a sampling algorithm to solve the original nonlinear finite-time OCP using trajectories sampled from the linear (integrator) dynamics of the related fictitious OCP. Prior path-integral based sampling control algorithms (\cite{Kappen2006}, \cite{Theodorou2011IPI}) rely on simulating trajectories of nonlinear dynamics  to numerically approximate probability distributions, thus introducing high computational complexity and inaccuracy. Since our algorithm uses samples from linear dynamics, we use analytic knowledge of the sample distribution via a quadrature method to compute the control.

The paper is organized as follows. In section \ref{sec:LargeScale} we state the OCP considered, the corresponding PDE representation and stationary solutions to it. In section \ref{sec:HopfCole} we apply two variable transforms of the value and density functions, obtain the resulting representation of the PDEs as Schr\"odinger equations and show that the Schr\"odinger potential of the transformed PDEs is the cost function of a closely related OCP with integrator dynamics. In section \ref{sec:Stab} we present the stability analysis for the class of OCPs for agents with Langevin dynamics, explicit control design constraints and prove that the population density of optimally controlled agents stabilizes to the equilibrium density under the optimal control resulting from the solution of an OCP which obeys the constraints. In section \ref{sec:solver} we present a sampling algorithm to compute the finite-time nonlinear stochastic control using samples from linear dynamics. In section \ref{sec:conclusions} we state our conclusions and point to directions for future research.

\section{Control of Large-Size Populations}
\label{sec:LargeScale}

We first introduce some notation and then describe the control problem considered in this work. We denote vector inner products by $a \cdot b$, the induced Euclidean norm by $|a|$ and its square by $a^2 = |a|^2$. 
%The symbol $I$ denotes an identity matrix and inner product between matrices is denoted as $A:B := \rtr(A^\rT B)$.
$\partial_t$ denotes partial derivative with respect to $t$ while $\nabla$, $\nabla \cdot$ and $\Delta$ denote the gradient, divergence and Laplacian operations respectively. $L^2(\Rb^d)$ denotes the class of square integrable functions of $\Rb^d$. The norm of a function $f$ and inner product of functions $f_1, f_2$ in this class is denoted by $||f||_{L^2(\Rb^d)}$ and $\big< f_1,f_2 \big>_{L^2(\Rb^d)}$ respectively.

\subsection{Control problem}
\label{sec:LargeScale_OCP}

Let $x_s$, $u(s)$ $\in \Rb^d$ denote the state and control inputs of a representative agent which obeys the controlled first order dynamics:
\begin{equation}
	\rd x_s = - \nabla \nu(x_s) \rd s + u(s) \rd s + \sigma \rd w_s \label{dyn}
\end{equation}
for every $s \geq 0$, driven by standard $\Rb^d$ Brownian motion, with noise intensity $0 < \sigma$ on the filtered probability space $\{ \Omega, \calF, \{\calF_t\}_{t \geq 0},\Pb \}$. These dynamics are the controlled version of a Langevin system in the overdamped case. The smooth function $\nu:\Rb^d \rightarrow \Rb$ is called the Langevin potential and $u \in \calU := \calU[0,T]$ where $\calU$ is the class of admissible controls \cite{YongBook_1958} containing functions $u:[0,T] \times \Rb^d \rightarrow \Rb^d$. Consider the following optimal control problem (OCP):
\begin{equation}
\underset{u \in \calU}{\min} J(u) := \underset{T \rightarrow +\infty}{\lim} \frac{1}{T} \Eb \left[ \int_{0}^{T} q(x_s) \rd s + \frac{R}{2}u^2 \; \rd s \right] \label{OCP}
\end{equation}
subject to \eqref{dyn} which we call problem \textbf{(P1)}, wherein the expectation is calculated on the probability density $p(s,x)$ of $x_s$ for all $s \geq 0$ and represents the distribution of the population of agents, with the initial density being $x_0 \sim p(0,x)$, $q: \Rb^d \rightarrow \Rb$ is a known deterministic function which has at most quadratic growth and is bounded from below and $R > 0$ is the control cost. We assume that $\nabla \nu(\cdot), q(\cdot)$ and functions in the class $\calU$ are measurable.
%We refer to the OCP \eqref{OCP} subject to dynamics \eqref{dyn} as problem \textbf{(P1).}
%Since the cost function does not explicitly depend on the density, the above OCP does not model systems in which agents interact explicitly. We therefore do not refer to the above large-scale OCP as a MFG. We will briefly return to the case of MFGs in section \ref{sec:HopfCole_NLS}.

\subsection{PDE representation}

Standard application of \textit{dynamic programming} \cite{Fleming2006} implies that under certain regularity conditions \cite{yinmehmeysha10} which we assume to be true, problem \textbf{(P1)} is equivalent to the following HJB and FP equations governing the value and density functions respectively:
\begin{align}
q - c - \frac{(\nabla v^\infty)^2}{2R} - \nabla v^\infty \cdot \nabla \nu + \frac{\sigma^2}{2} \Delta v^\infty =& 0 \label{s1} \\
\nabla ((\nabla \nu + \frac{\nabla v^\infty}{R})p^\infty) + \frac{\sigma^2}{2} \Delta p^\infty =& 0 \label{s2}
\end{align}
with the constraint $\int p^\infty \rd x = 1$, where $c$ is the optimal cost. The optimal control is given by $u^\infty(x) = -\nabla v^\infty/R$. Under certain regularity conditions  \cite{yinmehmeysha10}, which we assume to be true,
the time-varying relative value \cite{Borkar2006} function and density corresponding to problem \textbf{(P1)} are governed by the following equations:
\begin{align}
-\partial_t v =& q -c - \frac{(\nabla v)^2}{2R} - \nabla v \cdot \nabla \nu + \frac{\sigma^2}{2} \Delta v \label{HJB} \\
\partial_t p =& \nabla \cdot ((\nabla \nu + \frac{\nabla v}{R})p) + \frac{\sigma^2}{2} \Delta p \label{FP}
\end{align}
with the constraint $\int p(t,x) \rd x = 1$ for all $t \geq 0$. The optimal control is given by $u^*(t, x) = -\nabla v/R$. In this work, we assume the additional conditions \cite{YongBook_1958} which are required to show that the HJB PDEs \eqref{s1} and \eqref{HJB} have unique solutions. Both steady state and time-varying HJB PDEs are semilinear.

\begin{remark}\label{finitetime}
	The finite-time OCP analogous to the infinite-time OCP \textbf{(P1)} given by:
	\begin{equation}
	\underset{u \in \calU}{\min} J(u) := \Eb \left[ \int_{0}^{T} q(x_s) \rd s + \frac{R}{2}u^2 \; \rd s \right]. \label{finitetimeOCP}
	\end{equation}
	subject to the dynamics \eqref{dyn} has the PDE representation given by equations \eqref{HJB}, \eqref{FP} with $c = 0$, initial density given by $p(0,x)$ and constraint $\int p(t,x) \rd x = 1$.
\end{remark}

%\begin{remark}\label{dicountcost}
%	For the OCP with discounted cost utility:
%	\begin{equation}
%	\underset{u \in \calU}{\min} J(u) := \Eb \left[ \int_{0}^{T} \re^{-\rho s} \left(q(x_s) \rd s + \frac{R}{2}u^2 \; \rd s\right) \right], \label{discountOCP}
%	\end{equation}
%	subject to the dynamics \eqref{dyn}, the optimality system is given by equations \eqref{HJB}, \eqref{FP} with $c$ replaced by $\rho v$, initial density given by $p(0,x)$ and constraint $\int p(t,x) \rd x = 1$. The results presented in this paper can thus be extended to this case.
%\end{remark}

\subsection{Stationary solution}

The FP equation governing the density of an overdamped Langevin system is called the Smoluchowski PDE. The FP PDE \eqref{s2}, can be interpreted as the Smoluchowski PDE for such a Langevin system with the restoring potential $\nu + v^\infty/R$. The analytical solution to this FP equation can be obtained as a Gibbs distribution using this interpretation, under certain conditions given below, on the solution pair $(v^\infty,p^\infty)$ of the equations (\ref{s1}, \ref{s2}) and the Langevin potential $\nu$. We denote $w(x) := \nu(x) + \frac{v^\infty(x)}{R}$. We state an assumption below which is required to obtain analytic solutions to the FP equation. Note that the stationary densities obtained using the following lemma may be non-Gaussian.
\begin{itemize}
	\item[\textbf{(A0)}] There exist ($v^\infty(x), p^\infty(x)) \in (C^2(\Rb^{d}))^2$ satisfying (\ref{s1},\ref{s2}) such that
	{$\lim\limits_{|x| \rightarrow +\infty} w(x) = +\infty$} and {$\exp\left({-\frac{2}{\sigma^2} w(x)}\right) \in L^1(\Rb^{d})$}.
\end{itemize}
\begin{lemma} \label{LemmagenStationSoln}
	{Let \textbf{(A0)} be true. If $\nu(x)$ is a smooth functions satisfying \textbf{(A0)}, then the unique stationary solution to the density given by the Fokker Planck equation \eqref{s2} is 
	\begin{equation}
		p^\infty(x) := \frac{1}{Z}\exp\left({-\frac{2}{\sigma^2}\bigg( w(x) \bigg)}\right)(x) \label{genCtrlSmolSoln},
	\end{equation}
	where $Z = \int  \exp\left({-\frac{2}{\sigma^2} w(x)}\right) \; \rd x$.}
\end{lemma}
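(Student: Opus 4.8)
The idea is to read \eqref{s2} as a stationary Smoluchowski equation in divergence form and to exploit the gradient structure of the effective drift $-\nabla w$, with $w = \nu + v^\infty/R$ as in the statement. First I would rewrite \eqref{s2} as $\nabla\cdot\vJ = 0$, where $\vJ := \nabla w\, p^\infty + \tfrac{\sigma^2}{2}\nabla p^\infty$ is the probability current. Existence of a solution of the claimed form is then immediate by verification: setting $p^\infty := Z^{-1}\exp\!\left(-\tfrac{2}{\sigma^2}w\right)$ gives $\nabla p^\infty = -\tfrac{2}{\sigma^2}\nabla w\, p^\infty$, so $\vJ \equiv 0$ and a fortiori $\nabla\cdot\vJ = 0$; this is the detailed-balance property of gradient drifts. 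Under \textbf{(A0)} the normaliser $Z = \int \exp\!\left(-\tfrac{2}{\sigma^2}w\right)\rd x$ is finite and strictly positive, so $p^\infty$ is a genuine probability density, smooth and strictly positive on $\Rb^d$, and the confining condition $\lim_{|x|\to\infty} w(x) = +\infty$ guarantees it has the expected decay.

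For uniqueness, let $\tilde p \in C^2(\Rb^d)$ be any stationary, nonnegative, normalised solution of \eqref{s2}, and set $h := \tilde p/p^\infty$, which is well defined because $p^\infty > 0$ everywhere. Using $\nabla p^\infty = -\tfrac{2}{\sigma^2}\nabla w\, p^\infty$, a direct computation shows that the current associated with $\tilde p$ collapses to $\vJ[\tilde p] = \tfrac{\sigma^2}{2}\, p^\infty \nabla h$, so \eqref{s2} for $\tilde p$ becomes $\nabla\cdot(p^\infty\nabla h) = 0$. Multiplying by $h$, integrating over a ball $B_R$, integrating by parts, and letting $R \to \infty$ yields $\int_{\Rb^d} p^\infty\, |\nabla h|^2\,\rd x = 0$, whence $\nabla h \equiv 0$, so $h$ is constant; the normalisation $\int \tilde p\,\rd x = \int p^\infty\,\rd x = 1$ then forces $h \equiv 1$, i.e. $\tilde p = p^\infty$, which is \eqref{genCtrlSmolSoln}.

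I expect the main obstacle to be the passage $R\to\infty$ in the integration by parts: one must show that the boundary flux $\int_{\partial B_R} h\, p^\infty\, \partial_n h\,\rd S$ vanishes along a suitable sequence of radii. This is precisely where \textbf{(A0)} and the standing regularity assumptions enter — the pair $\lim_{|x|\to\infty} w(x) = +\infty$ and $\exp\!\left(-\tfrac{2}{\sigma^2}w\right)\in L^1(\Rb^d)$ force $p^\infty$ to decay fast enough that, for any admissible integrable stationary density $\tilde p$, the products $h\,p^\infty = \tilde p$ and $p^\infty\nabla h$ are small at infinity, which lets one extract such a sequence. An alternative and arguably cleaner route is probabilistic: under \textbf{(A0)} the Langevin diffusion with gradient drift $-\nabla w$ and diffusion intensity $\sigma$ is positive recurrent with a confining potential, hence possesses a unique invariant probability measure, which by the first part of the argument is exactly \eqref{genCtrlSmolSoln}. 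Everything else in the proof is routine differentiation and verification.
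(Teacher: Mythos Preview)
Your proposal is correct and rests on the same identification the paper makes: read \eqref{s2} as the stationary Smoluchowski equation for an overdamped Langevin diffusion with effective potential $w = \nu + v^\infty/R$, so that the Gibbs density \eqref{genCtrlSmolSoln} is the unique normalised stationary solution. The paper's own proof simply records this observation and invokes Proposition~4.2 of Pavliotis~\cite{Pavliotis2014} for existence and uniqueness, whereas you have written out the standard detailed-balance verification and the ratio/energy argument that underlie that proposition.
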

\begin{proof}
	We observe that the \eqref{s2} is the Smoluchowski equation for an overdamped Langevin system given by:
	\begin{equation}
	\rd x_s = - \nabla (\nu+ v^\infty/R)(x_s) \; \rd s + \sigma \rd w_s \label{stationoptagentLangevin}.
	\end{equation}
	Under the assumptions above, the proof then follows directly from proposition 4.2, pp 110 in \cite{Pavliotis2014}.
\end{proof}

\section{Schr\"odinger Approach}
\label{sec:HopfCole}

The semilinear HJB PDEs above have a linear representation in the time-varying and steady state case. In the time-varying case it is obtained using the well-known Cole-Hopf %(\cite{Cole1950}, \cite{Hopf1950})
transform $\phi := \exp(-v/\sigma^2 R)$, 
%\begin{equation}
%\phi := \exp(-v/\sigma^2 R) \label{ColeHopf}
%\end{equation}
used in stochastic control theory by Kappen \cite{Kappen2005a}:
\begin{equation}
-\partial_t \phi = -\frac{q \phi}{\sigma^2 R} - \nabla \phi \cdot \nabla \nu + \frac{\sigma^2}{2} \Delta \phi. \label{HopfColeHJB}
\end{equation}
The advection-diffusion equation above has a path-integral solution \cite{Fleming1978} which is useful in computing the control \cite{Kappen2005a, Kappen2005b, Theodorou2011IPI}. In this section we apply two transforms providing a diffusion (no advection) Schr\"odinger PDE representation of the semilinear HJB \textit{and} linear FP equations. This facilitates a stability analysis of the steady state solution to the equations (\ref{HJB}, \ref{FP}) in section \ref{sec:Stab}, based on the spectral properties of the Schr\"odinger operator. Further, in this section we interpret the corresponding Schr\"odinger potential as the cost function of a fictitious but intimately related OCP with integrator dynamics. This motivates a quadrature based algorithm to solve the transformed HJB equation and compute the control in section \ref{sec:solver}.

\subsection{Application of the Cole-Hopf transform}
\label{sec:newHopfCole}

\indent \indent We apply the Cole-Hopf type transform:
\begin{equation}
f(t,x) := \exp(-(v(t,x) + R \nu(x))/\sigma^2 R), \label{newColeHopf}
\end{equation}
which leads to the following representation of equation \eqref{HJB}:
\begin{equation}
-\partial_t f = \frac{c f}{\sigma^2 R} -\frac{V f}{\sigma^2 R} + \frac{\sigma^2}{2} \Delta f = \frac{c f}{\sigma^2 R} - H f, \label{modHopfColeHJB}
\end{equation}
where we denote the modified cost function $V := q + (R/2)(\nabla \nu)^2 - (\sigma^2R/2) \Delta \nu$ and the operator $H := \frac{V}{\sigma^2 R} - \frac{\sigma^2}{2} \Delta$ is a Schr\"odinger operator with potential $\frac{V(x)}{\sigma^2 R}$. The transformed PDE can be verified by using the calculations $\partial_t v = -\sigma^2 R \frac{\partial_t f}{f} $, $\nabla f = -\frac{f}{\sigma^2 R} \nabla (v + R \nu)$, $\Delta f = -\frac{\nabla f}{\sigma^2 R} \cdot \nabla (v + R \nu) - \frac{f}{\sigma^2 R} \Delta (v + R\nu)$ and $\frac{(\nabla v)^2}{2R} = \big( \frac{\sigma^4 R}{2}\big( \frac{\nabla f}{f} \big)^2 + \sigma^2 R \frac{\nabla f}{f} \cdot \nabla \nu + \frac{R}{2}((\nabla \nu)^2 \big)$ in equation \eqref{HJB} thus recovering equation \eqref{modHopfColeHJB}. Similarly, it can be shown that if $v(t,x)$ is a solution of equation \eqref{HJB} then $f(t,x)$ given by \eqref{newColeHopf} is a solution to equation \eqref{modHopfColeHJB}.

\textit{Hermitizing} \cite{Ullmo2016} the density as:
\begin{equation}
	g := \frac{p}{f}, \label{hermit}
\end{equation}
then gives the following representation of equation \eqref{FP}:
\begin{equation}
-\partial_t g = -\frac{c g}{\sigma^2 R} + \frac{V g}{\sigma^2 R} - \frac{\sigma^2}{2} \Delta g = -\frac{c g}{\sigma^2 R} + H g, \label{modHopfColeFP}
\end{equation}
with the initial time boundary condition $g(0,x) = \frac{p}{f}(0,x)$ and normalizing constraint $\int f(t,x) g(t,x) \rd x = 1$ for all $t \geq 0$. This can be verified by using the derivatives $\partial_t p = \partial_t g f + g \partial_t f$, $\nabla p =  f \nabla g + g \nabla f$, $\Delta p =  f \Delta g + 2 \nabla g \cdot \nabla f + g \Delta f$, $\nabla (\sigma^2 \ln f) p = \sigma^2  g f \frac{\nabla f}{f} = \sigma^2  g \nabla f$ and equation \eqref{modHopfColeHJB} in equation \eqref{FP}, thus recovering the equation above. Similarly, it can be shown that if $p(t,x)$ is a solution of equation \eqref{FP} then $g(t,x) = \frac{p}{f}$, with $f(t,x)$ given by \eqref{newColeHopf}, is a solution to equation \eqref{modHopfColeFP}. We summarize this fact in the following theorem.
\begin{theorem} \label{ThmColeHopf}
	$( f(t,x), g(t,x) )$ is a solution to the linear PDEs (\ref{modHopfColeHJB}, \ref{modHopfColeFP}) such that $\int f(t,x) g(t,x) \rd x = 1$ for all $t \geq 0$ if and only if
	%$\bigg( v(t,x) = -\sigma^2R \ln (f)(t,x) - R \nu(x), p(t,x) = f(t,x) g(t,x) \bigg)$
	\begin{align}
	v(t,x) =& -\sigma^2R \ln (f)(t,x) - R \nu(x) \label{ColeHopfvinv} \\
	p(t,x) =& f(t,x) g(t,x) \label{ColeHopfpinv}
	\end{align}
	are solutions to the nonlinear equations \eqref{HJB} and \eqref{FP}. Further, the optimal control is given by $u^* = - \nabla v/R = \sigma^2 \nabla f/f$.
\end{theorem}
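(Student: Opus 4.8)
The plan is to verify the equivalence by direct substitution in both directions, since the transforms \eqref{newColeHopf} and \eqref{hermit} are explicit and invertible. The key observation is that the forward direction (from the nonlinear pair $(v,p)$ to the linear pair $(f,g)$) has already been carried out in the text preceding the statement: the chain of derivative identities $\partial_t v = -\sigma^2 R\,\partial_t f/f$, $\nabla f = -\tfrac{f}{\sigma^2 R}\nabla(v+R\nu)$, $\Delta f = -\tfrac{\nabla f}{\sigma^2 R}\cdot\nabla(v+R\nu) - \tfrac{f}{\sigma^2 R}\Delta(v+R\nu)$, and the decomposition of $(\nabla v)^2/2R$, substituted into \eqref{HJB}, yield \eqref{modHopfColeHJB}; and similarly the identities for $\partial_t p$, $\nabla p$, $\Delta p$ together with \eqref{modHopfColeHJB} substituted into \eqref{FP} yield \eqref{modHopfColeFP}. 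So one direction is essentially bookkeeping already done.

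For the converse direction, I would start from a solution pair $(f,g)$ of \eqref{modHopfColeHJB}, \eqref{modHopfColeFP} with $\int fg\,\rd x = 1$, and \emph{define} $v$ and $p$ by \eqref{ColeHopfvinv}, \eqref{ColeHopfpinv}. The point is that \eqref{ColeHopfvinv} is exactly the inverse of \eqref{newColeHopf} (solving $f = \exp(-(v+R\nu)/\sigma^2 R)$ for $v$), which is well-defined precisely because $f > 0$ — this positivity is where I expect to need a word of justification, so I would note that \eqref{modHopfColeHJB} is a linear parabolic equation and its solution inherits positivity from a positive terminal condition $f(T,\cdot) = \exp(-(v(T,\cdot)+R\nu)/\sigma^2 R) > 0$ (or, in the relative-value setting, from the structure of the problem), hence $\ln f$ makes sense. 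Given positivity, the same derivative identities used in the forward direction are reversible algebraic manipulations, so substituting \eqref{ColeHopfvinv} into \eqref{HJB} reproduces \eqref{modHopfColeHJB} and conversely; likewise $p = fg$ satisfies \eqref{FP} iff $g$ satisfies \eqref{modHopfColeFP}. The normalization $\int p\,\rd x = \int fg\,\rd x = 1$ transfers directly between the two constraint sets.

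Finally, the optimal control formula follows by combining the known expression $u^* = -\nabla v/R$ from the dynamic-programming characterization with \eqref{ColeHopfvinv}: differentiating, $\nabla v = -\sigma^2 R\,\nabla f/f - R\nabla\nu$, so $u^* = -\nabla v/R = \sigma^2 \nabla f/f + \nabla\nu$. I would double-check the statement here, since the theorem asserts $u^* = \sigma^2\nabla f/f$ without the $\nabla\nu$ term — this is consistent only if one reads $u^*$ as the control \emph{additional to} the drift, or if the intended transform reference is the plain Cole–Hopf $\phi = \exp(-v/\sigma^2 R)$ of \eqref{HopfColeHJB} rather than $f$; in any case the clean identity is $u^* = -\nabla v/R$ expressed through the transform, and I would present it in the form that matches the earlier conventions. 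The main (and only real) obstacle is the positivity/regularity argument needed to invert the log transform; everything else is reversible algebra already spelled out above the statement.
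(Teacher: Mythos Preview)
Your approach is exactly the paper's: the proof consists of the derivative identities displayed in the text immediately preceding the theorem, and the converse direction is handled by the sentence ``Similarly, it can be shown that \ldots'' --- i.e., the same reversible algebra you describe. Your observation about the control formula is also correct: from \eqref{ColeHopfvinv} one gets $-\nabla v/R = \sigma^2\nabla f/f + \nabla\nu$, so the stated identity $u^* = \sigma^2\nabla f/f$ is off by the drift term (consistent with the relation $\hat{u}^* = u^* - \nabla\nu$ in Section~\ref{sec:HopfCole_interpret}); this is a slip in the statement, not in your argument.
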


The applied transforms (\eqref{newColeHopf},\eqref{hermit}) correspond to a \textit{diagonalization} or decoupled, linear representation of the coupled equations (\eqref{HJB}, \eqref{FP}) as follows:
\begin{equation}
\partial_t \begin{bmatrix}
f \\ g
\end{bmatrix}
=
\begin{bmatrix}
H - \frac{c}{\sigma^2 R} \qquad \qquad 0 \\
0 \qquad \qquad -H + \frac{c}{\sigma^2 R}
\end{bmatrix}
\begin{bmatrix}
f \\ g
\end{bmatrix} \label{diag}
\end{equation}

Analogously, it can be shown that the stationary value and density functions satisfying the stationary nonlinear equations (\ref{s1}, \ref{s2})  can be represented by the transformation variables $f^\infty := \exp(-(v^\infty + R \nu)/\sigma^2 R)$ and $g^\infty := p^\infty/f^\infty$, which both satisfy the following eigenvalue problem:
\begin{align}
H e(x) =& \frac{c}{\sigma^2 R} e(x) \label{transs1s2}
\end{align}
%Further, we observe from the Cole-Hopf transform and the explicit solution \eqref{stationoptagentLangevin} of the stationary solution to the FP equation \eqref{s2}, that $p^\infty = (f^\infty)^2/Z$. Therefore the hermitization transform used implies that $g^\infty = f^\infty/Z$. 
subject to the normalizing constraint $\int f^\infty(x) g^\infty(x) \rd x = 1$.
\begin{theorem} \label{ThmColeHopfstatn}
	$\left(f^\infty(x), g^\infty(x) \right)$ are both solutions to the eigenvalue problem \eqref{transs1s2} such that $\int f^\infty(x) g^\infty(x) \rd x = 1$ if and only if
	%$\bigg( v^\infty(x)= -\sigma^2R \ln (f^\infty)(x) - R \nu(x), p^\infty(x) = f^\infty(x) g^\infty(x) \bigg)$
	\begin{align}
	v^\infty(x) =& -\sigma^2R \ln (f^\infty)(x) - R \nu(x) \label{ColeHopfstatnvinv} \\
	p^\infty(x) =& f^\infty(x) g^\infty(x) \label{ColeHopfstatnpinv}
	\end{align}
	are solutions to the nonlinear equations \eqref{s1} and \eqref{s2}). Further, the optimal control is given by $u^\infty = - \nabla v^\infty/R = \sigma^2 \nabla f^\infty/f^\infty$.
\end{theorem}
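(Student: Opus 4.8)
The plan is to treat this as the stationary counterpart of Theorem \ref{ThmColeHopf} and prove it by direct substitution, mirroring the computations of Section \ref{sec:newHopfCole} with all time derivatives set to zero. Indeed, \eqref{s1}--\eqref{s2} are precisely \eqref{HJB}--\eqref{FP} restricted to $t$-independent functions, the transforms \eqref{newColeHopf} and \eqref{hermit} carry $t$-independent pairs $(v,p)$ to $t$-independent pairs $(f,g)$ and conversely (they involve no time derivative), and a $t$-independent solution of \eqref{modHopfColeHJB} (resp. \eqref{modHopfColeFP}) is by definition a solution of $H f^\infty = \frac{c}{\sigma^2 R} f^\infty$ (resp. $H g^\infty = \frac{c}{\sigma^2 R} g^\infty$), which is exactly \eqref{transs1s2}; so the whole statement can alternatively be read off as the stationary restriction of Theorem \ref{ThmColeHopf}, with the normalization $\int f^\infty g^\infty \, \rd x = 1 \Leftrightarrow \int p^\infty \, \rd x = 1$ immediate from $p^\infty = f^\infty g^\infty$.

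Concretely I would proceed in these steps. (i) Substitute $f^\infty := \exp(-(v^\infty + R\nu)/\sigma^2 R)$ into \eqref{s1}, using $\nabla f^\infty = -\frac{f^\infty}{\sigma^2 R}\nabla(v^\infty + R\nu)$, the corresponding expansion of $\Delta f^\infty$, and the quadratic identity for $\frac{(\nabla v^\infty)^2}{2R}$ recorded in Section \ref{sec:newHopfCole}, and verify that the $\nabla\nu$-dependent terms cancel to leave exactly $H f^\infty = \frac{c}{\sigma^2 R} f^\infty$. (ii) Substitute $p^\infty := f^\infty g^\infty$ into \eqref{s2}, using $\nabla p^\infty = f^\infty\nabla g^\infty + g^\infty\nabla f^\infty$ and $\Delta p^\infty = f^\infty\Delta g^\infty + 2\nabla g^\infty\cdot\nabla f^\infty + g^\infty\Delta f^\infty$ together with the eigenvalue equation for $f^\infty$ from step (i), and verify that the result collapses to $H g^\infty = \frac{c}{\sigma^2 R} g^\infty$. (iii) Observe that each identity in (i)--(ii) is an equivalence, since $f^\infty$ is strictly positive (so \eqref{ColeHopfstatnvinv} inverts \eqref{newColeHopf}, the logarithm is legitimate, and $g^\infty = p^\infty/f^\infty$ has the well-defined inverse $p^\infty = f^\infty g^\infty$); hence every implication runs both ways, giving the ``if and only if''. (iv) Read the constraint $\int f^\infty g^\infty \, \rd x = 1$ off $p^\infty = f^\infty g^\infty$. (v) Obtain the optimal-control expression by a direct gradient computation from \eqref{ColeHopfstatnvinv} substituted into $u^\infty = -\nabla v^\infty/R$.

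I do not expect a genuine obstacle: the content is the stationary shadow of Theorem \ref{ThmColeHopf}, and the diagonal form \eqref{diag} already makes transparent why $f^\infty$ and $g^\infty$ end up as eigenfunctions of the \emph{same} Schr\"odinger operator $H$ for the \emph{same} eigenvalue $c/\sigma^2 R$ --- the sign reversal between the two diagonal blocks is exactly offset once the zero right-hand side is moved across. The only points that need care are bookkeeping ones: recording the regularity, positivity and integrability hypotheses on $(v^\infty,p^\infty)$ --- supplied by \textbf{(A0)} and the standing regularity assumptions of Section \ref{sec:LargeScale} --- under which the logarithm in \eqref{ColeHopfstatnvinv}, the quotient in \eqref{hermit}, and the constraint $\int f^\infty g^\infty \, \rd x = 1$ are all meaningful and invertible; and keeping track of the fact that the constant $c$ in \eqref{s1} (the optimal cost of \textbf{(P1)}) is the \emph{same} constant that plays the role of the eigenvalue in \eqref{transs1s2}, so that no new parameter is introduced. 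With these in place the proof is a routine verification.
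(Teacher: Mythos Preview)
Your proposal is correct and matches the paper's approach exactly: the paper does not give a separate proof of Theorem \ref{ThmColeHopfstatn} but simply states that the stationary case follows ``analogously'' from the time-varying computations in Section \ref{sec:newHopfCole}, which is precisely the stationary-restriction argument you outline. Your steps (i)--(v) spell out explicitly what the paper leaves implicit.
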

Given a solution pair $(v^\infty,p^\infty)$ to the equations \eqref{s1} and \eqref{s2}) it is possible to obtain explicit solutions to functions $(f^\infty,g^\infty)$ satisfying equation \eqref{transs1s2} such that $\int f^\infty g^\infty \rd x = 1$. The result in theorem \ref{LemmagenStationSoln} and the applied transforms can be used to verify the following corollary to theorem \ref{ThmColeHopfstatn}. 
\begin{corollary} \label{Corollaryfgstatnsol}
	Let $p^\infty : = \frac{1}{Z}\exp\left({-\frac{2}{\sigma^2}\bigg( w(x) \bigg)}\right)(x)$ with $w(x) := \nu(x) + \frac{v^\infty(x)}{R}$ and $Z$ the normalizing constant where $(v^\infty,p^\infty)$ is a pair satisfying \textbf{(A0)}. Then $f^\infty := \sqrt{Z  p^\infty}$ and $g^\infty := f^\infty/Z$ both satisfy equation \eqref{transs1s2} such that $\int f^\infty g^\infty \rd x = 1$.
\end{corollary}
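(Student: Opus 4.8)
The plan is to recognize that the functions $f^\infty := \sqrt{Z\, p^\infty}$ and $g^\infty := f^\infty/Z$ in the statement are nothing but the Cole--Hopf and Hermitizing transform variables $f^\infty = \exp(-(v^\infty + R\nu)/\sigma^2 R)$ and $g^\infty = p^\infty/f^\infty$ of theorem \ref{ThmColeHopfstatn}, once the explicit Gibbs form of $p^\infty$ supplied by lemma \ref{LemmagenStationSoln} is substituted. Having made that identification, the eigenvalue property \eqref{transs1s2} and the normalization follow from the ``$\Leftarrow$'' direction of theorem \ref{ThmColeHopfstatn}, so the corollary reduces to a short constant-matching computation.

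First I would apply lemma \ref{LemmagenStationSoln}: since $(v^\infty,p^\infty)$ satisfies \textbf{(A0)}, the stationary density is $p^\infty = \frac{1}{Z}\exp(-\frac{2}{\sigma^2} w)$ with $w = \nu + v^\infty/R$ and $Z = \int \exp(-\frac{2}{\sigma^2} w)\, \rd x < \infty$. Using $v^\infty + R\nu = R w$, the Cole--Hopf variable of theorem \ref{ThmColeHopfstatn} is $\exp(-(v^\infty+R\nu)/\sigma^2 R) = \exp(-w/\sigma^2)$, and squaring gives $\exp(-w/\sigma^2) = \sqrt{\exp(-2w/\sigma^2)} = \sqrt{Z\, p^\infty}$ — exactly the $f^\infty$ in the statement. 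Likewise $f^\infty/Z = \frac{1}{Z}\exp(-w/\sigma^2) = p^\infty/f^\infty$, the $g^\infty$ of theorem \ref{ThmColeHopfstatn}. Hence the two definitions coincide.

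Next I would check the normalization directly, $\int f^\infty g^\infty\, \rd x = \frac{1}{Z}\int (f^\infty)^2\, \rd x = \frac{1}{Z}\int \exp(-\frac{2}{\sigma^2} w)\, \rd x = 1$ by the definition of $Z$ (equivalently, this is $\int p^\infty\, \rd x = 1$). Since $(v^\infty,p^\infty)$ solves \eqref{s1}--\eqref{s2} by hypothesis, and the inverse transforms return exactly this pair ($-\sigma^2 R \ln f^\infty - R\nu = v^\infty$ and $f^\infty g^\infty = p^\infty$), theorem \ref{ThmColeHopfstatn} then yields that $f^\infty$ solves the eigenvalue problem $H e = \frac{c}{\sigma^2 R} e$ of \eqref{transs1s2}; and $g^\infty = f^\infty/Z$, being a scalar multiple, solves it with the same eigenvalue. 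This proves the claim.

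I do not expect a genuine obstacle: the content is bookkeeping. The points needing attention are the constant matching (the factor $2$ in the Gibbs exponent versus the single factor in $f^\infty = \exp(-w/\sigma^2)$, the square root, and where $Z$ sits) and confirming the hypotheses of the cited results are in force — namely that \textbf{(A0)} holds, so $p^\infty$ has the Gibbs form with $\exp(-\frac{2}{\sigma^2} w) \in L^1(\Rb^d)$, and that $f^\infty \in L^2(\Rb^d)$ (which holds since $||f^\infty||_{L^2(\Rb^d)}^2 = Z < \infty$), so that $H$ acts on $f^\infty,g^\infty$ and \eqref{transs1s2} is meaningful. An alternative would be to verify $H f^\infty = \frac{c}{\sigma^2 R} f^\infty$ by direct substitution using the derivative identities of section \ref{sec:newHopfCole}, but routing through theorem \ref{ThmColeHopfstatn} is cleaner.
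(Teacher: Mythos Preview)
Your proposal is correct and follows essentially the same route the paper indicates: the paper states only that the corollary ``can be verified'' using lemma~\ref{LemmagenStationSoln} and the applied transforms, and you carry out precisely that verification by matching $\sqrt{Z p^\infty}$ with the Cole--Hopf variable $\exp(-(v^\infty+R\nu)/\sigma^2 R)$ and $f^\infty/Z$ with $p^\infty/f^\infty$, then invoking theorem~\ref{ThmColeHopfstatn}. The constant-matching and normalization checks are all correct.
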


\subsection{Interpretation} \label{sec:interpretHopfCole}
\label{sec:HopfCole_interpret}

\indent \indent The Schr\"odinger potential $\frac{V(x)}{\sigma^2 R}$ defined earlier can be interpreted as the cost function of the following \textit{fictitious} OCP which has an intimate connection with the original OCP \textbf{(P2)}:
\begin{equation}
\underset{\hat{u} \in \calU}{\min} J(u) := \underset{T \rightarrow +\infty}{\lim} \frac{1}{T} \Eb \left[ \int_{0}^{T} V(\hat{x}_s) \rd s + \frac{R}{2}\hat{u}^2\; \rd s \right] \label{modOCP}
\end{equation}
subject to the simple integrator dynamics:
\begin{equation}
\rd \hat{x}_s = \hat{u}(s) \rd s + \sigma \rd w_s. \label{lindyn}
\end{equation}
We refer to the OCP \eqref{modOCP} subject to \eqref{lindyn} as problem \textbf{(P2)}. The time-varying PDE representation of problem \textbf{(P2)} is given by:
\begin{align}
-\partial_t \hat{v} =&  V - \hat{c} - \frac{(\nabla \hat{v})^2}{2R} + \frac{\sigma^2}{2} \Delta \hat{v} \label{modHJB} \\
\partial_t \hat{p} =& \nabla \cdot (\frac{\nabla \hat{v}}{R}p) + \frac{\sigma^2}{2} \Delta \hat{p} \label{modFP}
\end{align}
where $\hat{c}$ is the optimal cost.

It is easily observed that if $v$ is the solution to the HJB equation \eqref{HJB}, then $\hat{v} = v + R \nu$ is a solution to the HJB equation \eqref{modHJB}. Therefore, the time-varying optimal controls: $u^*$ of the OCP \textbf{(P1)} and $\hat{u}^*$ of the OCP \textbf{(P2)}, are related as $\hat{u}^* = u^* - \nabla \nu$. Similarly, by substituting $\nabla \hat{v} = \nabla v + R \nabla \nu$ into equation \eqref{modFP}, we can see that the PDEs \eqref{FP}, \eqref{modFP} satisfied by the densities $p(s,x), \hat{p}(s,x)$ respectively, are identical. Therefore, given identical initial conditions $\hat{p}(0,x) = p(0,x)$, lemma \ref{LemmagenStationSoln} implies that $\hat{p}(s,x) = p(s,x)$ for all $s \geq 0$ where $p(s,x)$ is the density of optimally controlled agents associated with the OCP \textbf{(P1)}. To summarize, solving the equations (\ref{HJB}, \ref{FP}) corresponding to the OCP \textbf{(P1)} (subject to nonlinear passive dynamics) is equivalent to solving the equations (\ref{modHJB}, \ref{modFP}) corresponding to the OCP \textbf{(P2)} (subject to simple integrator dynamics). This fact is used in section \ref{sec:solver} to synthesize a solver to compute the finite-time optimal control.

\section{Control Design}
\label{sec:Stab}

The decay of an initial density of particles under open loop (uncontrolled) overdamped Langevin dynamics to a stationary density is a classic topic in statistical physics \cite{Risken1986}. In this section we analyze the decay of a perturbed density of agents under the action of the steady state controller obtained in problem \textbf{(P1)} to the corresponding stationary density. Since the HJB and FP equations \eqref{HJB} and \eqref{FP}) are coupled one-way, the perturbation analysis corresponds to that of the FP equation. In the presented approach, evolution of a perturbed density governed by equation \eqref{FP} is analyzed through the evolution of its hermitized form \eqref{hermit} governed by equation \eqref{modHopfColeFP}. Diagonalization \eqref{diag} of the PDE representation facilitates stability analysis based on the spectral properties of the Schr\"odinger operator, using which we obtain {analytical design constraints} on the {cost function $q(x)$} and {control parameter $R$} to guarantee stability of the equilibrium density. Note that our approach does not require the assumption of small (local) density perturbations or a Gaussian equilibrium as in \cite{Ullmo2016} which treated the stability of equilibrium densities in related MFG case, and applies to general perturbations to Gaussian or non-Gaussian stationary densities in the case of standard OCPs. Additionally, while the study in \cite{Ullmo2016} is limited to agents which have simple integrator dynamics the proposed framework applies to the broader class of nonlinear Langevin dynamics.

\subsection{Perturbations of the equilibrium density}

\indent \indent Consider a large-size population expressed by problem \textbf{(P1)}, which is controlled by the optimal steady state control $u^\infty = -\nabla v^\infty/R$ corresponding to the equation \eqref{s1} with a unique equilibrium density $p^\infty$ satisfying \eqref{s2} and \textbf{(A0)}. Theorem \ref{ThmColeHopfstatn} then implies that the value and density functions can be written as \eqref{ColeHopfstatnvinv}, \eqref{ColeHopfstatnpinv}, in terms of a pair of functions $(f^\infty,g^\infty)$, both satisfying equation \eqref{transs1s2} and $\int f^\infty g^\infty \rd x = 1$. Corollary \ref{Corollaryfgstatnsol} gives formulae for the function pair $\left(f^\infty(x), g^\infty(x) \right)$ in terms of the steady state solution pair $(v^\infty,p^\infty)$ satisfying equations (\ref{s1}, \ref{s2}). Time-varying value and density functions can be calculated as \eqref{ColeHopfvinv}, \eqref{ColeHopfpinv} in terms of the corresponding transformation variables $(f(t,x),g(t,x))$.

Time-varying densities, perturbed from the steady state density of agents can be written using the hermitization transform \eqref{hermit} as $p(t,x) = p^\infty(x) + \tilde{p}(t,x) = f^\infty(x) g^\infty(t,x) + f^\infty(x) \tilde{g}(t,x)$. Since we are studying stability of the steady state controller, there are no perturbations in the value function $v^\infty$ nor consequently, in the transformation variable $f^\infty$. Here, the function $\tilde{g}(t,x)$ corresponds to a perturbation in the hermitized density given as $g(t,x) = g^\infty(x) + \tilde{g}(t,x)$, which obeys the time-varying PDE \eqref{modHopfColeFP}. In this section we study the decay of a perturbed density $p^\infty + \tilde{p}$ to its steady state density $p^\infty$. We state the following corollary to theorem \ref{ThmColeHopf} which provides the perturbation equation for the hermitized density.
\begin{corollary}
	If $g^\infty(x)$ is a solution to the stationary PDE \eqref{transs1s2} and $g(t,x) = g^\infty(x) + \tilde{g}(t,x)$ is a solution to the PDE \eqref{modHopfColeFP} where $\tilde{g}(t,x) \in C^{1,2}([0,+\infty), \Rb^d)$, then $\tilde{g}(t,x)$ is governed by the linear PDE 
\begin{equation}
\partial_t \tilde{g} = -\left(H - \frac{c}{\sigma^2 R}\right) \tilde{g}. \label{transp2}
\end{equation}
\end{corollary}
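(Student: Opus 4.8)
The plan is to exploit the linearity of the Schr\"odinger operator $H$ together with the fact that the hermitized stationary density $g^\infty$ is time-independent. First I would rewrite the evolution equation \eqref{modHopfColeFP} for $g$ in the compact form $\partial_t g = -\left(H - \tfrac{c}{\sigma^2 R}\right) g$, simply by moving all terms to one side; recall that $H = \tfrac{V}{\sigma^2 R} - \tfrac{\sigma^2}{2}\Delta$ and that multiplication by the scalar $\tfrac{c}{\sigma^2 R}$ are both linear operators acting on functions of $x$.

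Next I would observe that $g^\infty(x)$ satisfies this same evolution equation trivially: on the one hand $\partial_t g^\infty = 0$ because $g^\infty$ does not depend on $t$, and on the other hand $\left(H - \tfrac{c}{\sigma^2 R}\right) g^\infty = 0$ by the stationary eigenvalue equation \eqref{transs1s2}. Hence $\partial_t g^\infty = -\left(H - \tfrac{c}{\sigma^2 R}\right) g^\infty$, both sides being identically zero.

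Then I would subtract the equation satisfied by $g^\infty$ from the one satisfied by $g = g^\infty + \tilde{g}$. Since $\tilde{g} = g - g^\infty$ and both $\partial_t$ and $H - \tfrac{c}{\sigma^2 R}$ are linear, this immediately yields $\partial_t \tilde{g} = -\left(H - \tfrac{c}{\sigma^2 R}\right)(g - g^\infty) = -\left(H - \tfrac{c}{\sigma^2 R}\right)\tilde{g}$, which is precisely \eqref{transp2}. The only point requiring care is that all the differential operations above are legitimate; this is guaranteed by the hypothesis $\tilde{g} \in C^{1,2}([0,+\infty),\Rb^d)$ (together with the standing regularity of $g^\infty$), so that $\partial_t \tilde{g}$ and $\Delta \tilde{g}$ exist in the classical sense and the pointwise manipulations are valid.

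I do not expect any genuine obstacle here: the statement is essentially a restatement of the linearity of the diagonalized system \eqref{diag}, and the content of the corollary is merely the bookkeeping observation that a perturbation of a stationary solution of a linear, homogeneous PDE again solves that PDE. The one thing worth double-checking while writing is the sign convention --- that \eqref{modHopfColeFP} indeed rearranges to $\partial_t g = -\left(H - \tfrac{c}{\sigma^2 R}\right)g$ rather than its negative --- which follows directly from the displayed form $-\partial_t g = -\tfrac{c g}{\sigma^2 R} + H g$.
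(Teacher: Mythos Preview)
Your proposal is correct and matches the paper's (implicit) argument: the paper states this corollary without proof, treating it as an immediate consequence of the linearity of the diagonalized system \eqref{diag} and the eigenvalue relation \eqref{transs1s2}. Your explicit subtraction of the stationary equation from the time-varying one, together with the sign check against $-\partial_t g = -\tfrac{c g}{\sigma^2 R} + Hg$, is exactly the bookkeeping the paper leaves to the reader.
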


\subsection{Stability}
\label{sec:Stab.stab}

\indent \indent We define the following Hilbert space and class of density perturbations for which we study stability.

\begin{definition}
Let \textbf{(A0)} hold. Denote $p^\infty : = \frac{1}{Z}\exp\left({-\frac{2}{\sigma^2}\bigg( w(x) \bigg)}\right)(x)$ with $w(x) := \nu(x) + \frac{v^\infty(x)}{R}$ and $Z$ the normalizing constant where $(v^\infty,p^\infty)$ is the unique pair satisfying \textbf{(A0)}. We denote by $f^\infty := \sqrt{Z p^\infty}$ and $g^\infty := f^\infty/Z$ two solutions to equation \eqref{transs1s2} such that $\int f^\infty g^\infty \rd x = 1$. We denote the Hilbert space of $L^2(\Rb)$ by $\calH$. The class of mass preserving density perturbations is defined as $S_0 := \left\{ \pi(x)\in \calH \bigg| \left< \pi, f^\infty \right>_\calH = 0 \right\}$.
\end{definition}

\begin{definition}
We define the class of initial perturbed densities as $S :=$ \\
$\left\{ p(0,x) = f^\infty (g^\infty(x) + \tilde{g}(0,x)) \bigg| \tilde{g}(0,x) \in S_0 \right\}$.
% \tilde{g}(0,x) \geq 0,
We say that the steady state density $p^\infty(x) = f^\infty(x) g^\infty(x)$ corresponding to the nonlinear equations (\ref{s1}, \ref{s2}) is asymptotically stable with respect to $S$ if there exists a solution $\tilde{g}(t,x)$ to the perturbation equation \eqref{transp2} such that $\underset{t \rightarrow +\infty}{\lim}||\tilde{g}(t,x)||_\calH = 0$.
\end{definition}

\begin{lemma}\label{LemmaSelfAdj}
If there exists a positive, even and continuous function $Q(x)$ on $\Rb$ which is non-decreasing for all $x \geq 0$ such that $\frac{V(x)}{\sigma^2 R} \geq -Q(x)$ for all $x \in \Rb$ and $\int \frac{\rd x}{\sqrt{Q(2x)}} \rd x = +\infty$ then the closure of $H$ is self adjoint.
\end{lemma}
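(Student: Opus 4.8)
The plan is to identify Lemma~\ref{LemmaSelfAdj} as a special case of the classical essential self-adjointness criterion for one-dimensional Schr\"odinger operators whose potential is bounded below by a controlled negative comparison function (the Sears--Weyl limit-point criterion; see e.g.\ Reed--Simon, \emph{Methods of Modern Mathematical Physics~II}, Theorem~X.9). Accordingly, I would first cast $H$ in standard form. Writing $W(x):=V(x)/(\sigma^2 R)$ with $V = q + (R/2)(\nabla\nu)^2 - (\sigma^2 R/2)\Delta\nu$, the function $W$ is real-valued and continuous on $\Rb$ by the smoothness of $\nu$ and the standing hypotheses on $q$; hence $H = -\tfrac{\sigma^2}{2}\Delta + W$, acting on $C_0^\infty(\Rb)$, is a Schr\"odinger operator with continuous potential. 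The positive constant $\sigma^2/2$ in front of the Laplacian is harmless: one may rescale the spatial variable $x\mapsto \sqrt{2}\,x/\sigma$, or simply note that the cited criterion is stated for $-a\,\rd^2/\rd x^2 + W$ with any $a>0$.

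Next I would verify that the comparison function $Q$ matches the hypotheses of the criterion: it is positive, continuous, even, and non-decreasing on $[0,\infty)$, so $Q(|x|)$ is a positive non-decreasing continuous function of $r=|x|$ with $W(x)\ge -Q(|x|)$ for all $x$; and the stated divergence condition $\int \tfrac{\rd x}{\sqrt{Q(2x)}} = +\infty$ becomes, after the substitution $r=2x$, the condition $\int_1^{+\infty}\tfrac{\rd r}{\sqrt{Q(r)}} = +\infty$, which is exactly the integral appearing in the criterion (the $2x$ versus $x$ is immaterial because $Q$ is monotone). The criterion then places $H$ in the limit-point case at both $\pm\infty$, equivalently shows $H$ is essentially self-adjoint on $C_0^\infty(\Rb)$, i.e.\ its closure $\overline H$ is self-adjoint, which is the assertion.

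If instead one wanted a self-contained proof, the standard route is to show that $(\overline H)^\ast = H^\ast$ has no eigenfunction in $\calH = L^2(\Rb)$ at $\pm i$: test the distributional equation $H^\ast u = \pm i\,u$ against $\chi_r^2\,u$ for a family of smooth cutoffs $\chi_r$ supported in $|x|\le 2r$ and equal to $1$ on $|x|\le r$, extract from the imaginary and real parts a differential inequality of the form $E'(r)\gtrsim E(r)/\sqrt{Q(2r)}$ for the local energy $E(r)=\int_{|x|\le r}\big(\tfrac{\sigma^2}{2}|u'|^2+|u|^2\big)\,\rd x$, and integrate; divergence of $\int \rd r/\sqrt{Q(2r)}$ then forces $E\equiv 0$, hence $u\equiv 0$. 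This energy/Gronwall estimate — and pinning down its constants — is the only substantive work in a from-scratch argument; invoking the classical criterion avoids it entirely. In either case the only point needing genuine checking is the continuity (hence local boundedness) of the transformed potential $W$, which is immediate from the smoothness of $\nu$ and the hypotheses on $q$, so I do not expect any real obstacle.
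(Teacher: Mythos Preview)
Your approach is correct and essentially the same as the paper's: the paper simply omits the proof, remarking that it follows directly from Theorem~1.1, p.~50 of Berezin--Shubin, which is precisely the Sears--Weyl essential-self-adjointness criterion you invoke via Reed--Simon~X.9. Your additional sketch of the cutoff/energy argument is a nice bonus but goes beyond what the paper provides.
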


We omit the proof since it follows directly from theorem 1.1, pp 50 in \cite{Berezin1991}. In particular, if $\frac{V(x)}{\sigma^2 R} \geq \text{k} \in \Rb$ then it follows that $H$ is self adjoint. The following assumption implies discreteness of the spectrum of $H$.

\begin{itemize}
\item[\textbf{(A1)}] $\underset{|x| \rightarrow +\infty}{\lim} V(x) = \underset{|x| \rightarrow +\infty}{\lim} q + \frac{R}{2}(\nabla \nu)^2 - \frac{\sigma^2R}{2} \Delta \nu =  +\infty$.
\end{itemize}

\begin{lemma}\label{LemmaSpecGap}
If \textbf{(A1)} is true then the closure of $H$ has a discrete spectrum.
\end{lemma}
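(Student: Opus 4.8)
The plan is to show that the closure $\bar H$ has \emph{compact resolvent}, which for a semibounded self-adjoint operator is equivalent to having a purely discrete spectrum (eigenvalues of finite multiplicity with no finite accumulation point). First I would record that \textbf{(A1)} makes $V/(\sigma^2 R)$ a continuous function on $\Rb$ tending to $+\infty$ at infinity, hence bounded below; the ``in particular'' clause of Lemma \ref{LemmaSelfAdj} then gives that $\bar H$ is self-adjoint and bounded below. After adding a constant we may assume $V \ge 1$ and work with the associated closed quadratic form $\mathfrak h(u) = \tfrac{\sigma^2}{2}\|u'\|_{L^2(\Rb)}^2 + \tfrac{1}{\sigma^2 R}\int V |u|^2\,\rd x$ with form domain $\calQ := \{ u \in H^1(\Rb) : \int V|u|^2\,\rd x < +\infty\}$.

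The core step is to prove that the embedding $\calQ \hookrightarrow \calH$ is compact. Take a sequence $(u_n)\subset\calQ$ with $\mathfrak h(u_n) \le C$ and $\|u_n\|_\calH \le 1$. Two facts follow: $(u_n)$ is bounded in $H^1(\Rb)$, and $\sup_n \int V|u_n|^2\,\rd x \le \sigma^2 R\,C$. For the tails, set $m(L) := \inf_{|x|\ge L} V(x)$; by \textbf{(A1)}, $m(L)\to+\infty$, and $\int_{|x|\ge L}|u_n|^2\,\rd x \le m(L)^{-1}\int V|u_n|^2\,\rd x \le \sigma^2 R\,C/m(L)$ uniformly in $n$ --- this is the uniform tightness. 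On each bounded interval $[-L,L]$, Rellich--Kondrachov gives a subsequence converging in $L^2(-L,L)$; a diagonal extraction over $L=1,2,\dots$ yields one subsequence converging in $L^2(-L,L)$ for every $L$, and combining this with the uniform tail bound shows that this subsequence is Cauchy in $\calH$. Hence $\calQ\hookrightarrow\calH$ is compact.

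Finally, compactness of $\calQ\hookrightarrow\calH$ implies that $\bar H^{-1}$ --- which, under the normalization $\bar H \ge 1$, factors as $\calH \to \calQ \hookrightarrow \calH$ via $\bar H^{-1/2}\bar H^{-1/2}$ --- is a compact operator on $\calH$, so $\bar H$ has compact resolvent and therefore discrete spectrum. I expect the only nontrivial point to be the compact embedding of the form domain, and within it the uniform tightness estimate at infinity: this is exactly where \textbf{(A1)} is used, and it is the single place where the noncompactness of $\Rb$ must be handled, the interior compactness being the standard Rellich argument. Alternatively, once self-adjointness and semiboundedness are in hand, one may simply invoke the classical discreteness criterion for Schr\"odinger operators whose potential tends to $+\infty$ (e.g.\ \cite{Berezin1991}), which is the route matching the style of Lemma \ref{LemmaSelfAdj}.
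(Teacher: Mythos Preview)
Your argument is correct: boundedness below of $V$ from \textbf{(A1)} yields self-adjointness via Lemma~\ref{LemmaSelfAdj}, and your form-domain compact-embedding proof (uniform tightness from $V\to+\infty$ plus Rellich--Kondrachov on bounded intervals, then diagonal extraction) cleanly gives compact resolvent and hence discrete spectrum. The paper, however, does not argue this out; it simply invokes Theorem~3.1, pp.~57 of \cite{Berezin1991}, exactly the alternative you mention in your last sentence. So your main route is a genuinely more self-contained and elementary derivation of the cited classical criterion, while the paper's route is the one-line citation you flag at the end; both are valid, and your version has the advantage of making transparent precisely where \textbf{(A1)} enters (the uniform tail estimate), at the cost of more space.
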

The proof of this theorem follows from theorem 3.1, pp 57 of \cite{Berezin1991}. This theorem implies that under assumption \textbf{(A1)}, the spectrum of $H$ denoted by $\{\lambda_n\}_{0 \leq n \leq +\infty}$ has the property that $\lambda_n \rightarrow +\infty$ as $n \rightarrow +\infty$ and the corresponding eigenfunctions denoted as $\{e_n(x)\}_{0 \leq n \leq +\infty}$ form a complete orthonormal system on $L^2(\Rb)$. The eigenproperty is explicitly written as $H e_n(x) = \lambda_n e_n(x)$. Further from proposition 3.2, pp 65 in \cite{Berezin1991} the eigenvalues have the property $\lambda_0 < \lambda_1 < \cdots < \lambda_n < \cdots$. We state the following assumption on the Schr\"odinger potential required to prove stability of the stationary density.
\begin{itemize}
\item[\textbf{(A2)}] $V(x) = q + \frac{R}{2}(\nabla \nu)^2 - \frac{\sigma^2R}{2} \Delta \nu \geq 0$.
\end{itemize}
In view of \textbf{(A1)}, \textbf{(A2)} can be satisfied for given $q$ and $\nu$ functions without affecting the equilibrium solutions $(v^\infty, p^\infty)$, by adding a constant to the cost function. This condition is required to apply the Krein-Rutman theorem in the proof of the main theorem below as in (section 1.A, pp 2, \cite{todorov2009eigenfunction}). The proof of the theorem uses standard techniques from spectral analysis of linear operators and is presented here for the purposes of completeness and explaining the precise role of the assumptions \textbf{(A1, A2)} in providing the guarantee of stability. We do not lay any claims of originality to the techniques of the proof.
%For example if $\nu(x) = \frac{x^3}{3}$, one cannot translate $q(x) = x^2$ by adding a constant to satisfy the above constraint.

\begin{theorem}\label{ThmStab}
Let \textbf{(A0, A1, A2)} be true. Let $\left(v^\infty(x), p^\infty(x) \right)$ be the unique stationary solution to the equations (\ref{s1}, \ref{s2}) and denote by $(f^\infty,g^\infty)$ the two solutions to problem \eqref{transs1s2} given in corollary \ref{Corollaryfgstatnsol}. If $\tilde{g}(0,x) \in S_0$ and $\{g_n\}_{0 \leq n \leq +\infty}$ are determined by 
\begin{equation}
\dot{g_n}(t) = -\left(\lambda_n - \frac{c}{\sigma^2 R} \right) g_n(t), \label{pertcoeffsODE}
\end{equation}
then $\tilde{g}(t,x) = \sum_{n = 1}^{+\infty} g_n(t) e_n(t)$ is the unique $\calH$ solution to the perturbation equation \eqref{transp2}. $p^\infty(x)$ is asymptotically stable with respect to $S$.
\end{theorem}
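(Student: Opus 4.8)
The plan is to expand the perturbation $\tilde g(0,x)$ in the complete orthonormal eigenbasis $\{e_n\}$ of $H$ (available by Lemma~\ref{LemmaSpecGap} under \textbf{(A1)}), propagate it through the linear PDE \eqref{transp2}, and then show every surviving mode decays exponentially. First I would write $\tilde g(0,x) = \sum_{n \geq 0} g_n(0) e_n(x)$ with $g_n(0) = \langle \tilde g(0,\cdot), e_n \rangle_\calH$. Because $H$ is self-adjoint with discrete spectrum, the semigroup $e^{-(H - c/\sigma^2 R)t}$ acts diagonally on this basis, so the unique $\calH$-solution of \eqref{transp2} is $\tilde g(t,x) = \sum_{n} g_n(t) e_n(x)$ where $g_n(t)$ solves the scalar ODE \eqref{pertcoeffsODE}, i.e. $g_n(t) = g_n(0)\, e^{-(\lambda_n - c/\sigma^2 R) t}$. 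Uniqueness follows from the standard fact that a $C^{1,2}$ solution must have coefficients $\langle \tilde g(t,\cdot), e_n\rangle$ satisfying \eqref{pertcoeffsODE}, which has a unique solution for each $n$.

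The second step is to identify the $n=0$ mode and show it is annihilated by the mass-preservation constraint. By Corollary~\ref{Corollaryfgstatnsol}, $f^\infty = \sqrt{Z p^\infty}$ solves \eqref{transs1s2}, i.e. $H f^\infty = \frac{c}{\sigma^2 R} f^\infty$, so $\frac{c}{\sigma^2 R}$ is an eigenvalue of $H$ with eigenfunction proportional to $f^\infty$. Since $f^\infty = \sqrt{Z p^\infty} > 0$ is strictly positive, the Krein--Rutman theorem (applicable because \textbf{(A2)} makes the Schr\"odinger potential nonnegative, as in \cite{todorov2009eigenfunction}) identifies $f^\infty$ as the ground state: $\lambda_0 = \frac{c}{\sigma^2 R}$ and $e_0 = f^\infty / \|f^\infty\|_\calH$. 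The condition $\tilde g(0,x) \in S_0$ means $\langle \tilde g(0,\cdot), f^\infty \rangle_\calH = 0$, hence $g_0(0) = 0$, so the sum in $\tilde g(t,x)$ runs from $n = 1$, matching the theorem statement.

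The final step is the decay estimate. By Parseval, $\|\tilde g(t,\cdot)\|_\calH^2 = \sum_{n \geq 1} |g_n(0)|^2 e^{-2(\lambda_n - \lambda_0) t}$. Since the spectrum is strictly ordered ($\lambda_0 < \lambda_1 < \cdots$, from \cite{Berezin1991}), we have a positive spectral gap $\lambda_1 - \lambda_0 > 0$, giving $\|\tilde g(t,\cdot)\|_\calH^2 \leq e^{-2(\lambda_1 - \lambda_0)t} \sum_{n\geq 1}|g_n(0)|^2 = e^{-2(\lambda_1-\lambda_0)t}\|\tilde g(0,\cdot)\|_\calH^2 \to 0$ as $t \to +\infty$. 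Then $\|\tilde p(t,\cdot)\|_\calH = \|f^\infty \tilde g(t,\cdot)\|$ is controlled by $\|\tilde g(t,\cdot)\|_\calH$ (using boundedness of $f^\infty$, which follows from \textbf{(A0)} since $w \to +\infty$), establishing asymptotic stability of $p^\infty$ with respect to $S$.

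The main obstacle I anticipate is the rigorous justification that the positive eigenfunction $f^\infty$ is the \emph{ground state} $e_0$ with the smallest eigenvalue — i.e. invoking Krein--Rutman (or a Perron--Frobenius-type argument for Schr\"odinger semigroups) correctly and verifying its hypotheses in the unbounded-domain setting. A secondary technical point is confirming that the formal eigenexpansion solution is the genuine $C^{1,2}$ solution and establishing uniqueness within the stated regularity class; this requires that the eigenfunction series and its termwise derivatives converge appropriately, which is where completeness of $\{e_n\}$ in $L^2(\Rb)$ and the self-adjointness from Lemma~\ref{LemmaSelfAdj} do the work.
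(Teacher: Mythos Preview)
Your proposal is correct and follows essentially the same approach as the paper: eigenfunction expansion in the basis $\{e_n\}$ of $H$, identification of $f^\infty$ as the ground state $e_0$ via Krein--Rutman under \textbf{(A2)} so that $\lambda_0 = c/\sigma^2 R$, use of $\tilde g(0,\cdot)\in S_0$ to kill the $n=0$ mode, and Parseval to conclude $\|\tilde g(t,\cdot)\|_\calH\to 0$. Two minor remarks: the paper's definition of asymptotic stability is already stated in terms of $\|\tilde g\|_\calH$, so your final step passing to $\|\tilde p\|$ is unnecessary; and your explicit spectral-gap bound $\|\tilde g(t,\cdot)\|_\calH \le e^{-(\lambda_1-\lambda_0)t}\|\tilde g(0,\cdot)\|_\calH$ is a sharper, cleaner conclusion than the paper's, which invokes Lebesgue dominated convergence to obtain only the qualitative limit.
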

\begin{proof}
%We first construct the finite time solution in $\calH$ to equation \eqref{transp2} with the representation $\tilde{g}(t,x) = \sum_{n = 0}^{+\infty} g_n(t) e_n(x)$.
Since $\tilde{g}(0,x) \in \calH$ we have the unique representation $\tilde{g}(0,x) = \sum_{n = 0}^{+\infty} g_n(0) e_n(x)$ where $g_n(0) = \left< \tilde{g}(0,x), e_n(x) \right>_\calH < +\infty$ for all $n$.
%Substituting the selected form of the solution in the perturbation equation \eqref{transp2} and using the eigenproperty $H e_n = \lambda_n e_n$ for all $n \geq 0$, we obtain the ODEs \eqref{pertcoeffsODE}. The unique solution to the ODEs \eqref{pertcoeffsODE} with initial values $g_n(0) = \left< \tilde{g}(0,x), e_n(x) \right>_\calH$ results in smooth coefficients ${g_n}(t) = g_n(0) \re^{-\lambda_n t}$ for all $t < +\infty$. Therefore, $\tilde{g}(t,x) = \sum_{n = 0}^{+\infty} g_n(t) e_n(x)$ where ${g_n}(t) = g_n(0) \re^{-\lambda_n t}$, is in $C^\infty([0,T],\Rb^d])$ for all $T > 0$ and is the unique finite time solution in $\calH$ to equation \eqref{transp2}. 
Since $\{e_n\}_{0 \leq n < +\infty}$ is a complete basis on $\calH$, any solution in $\calH$ to the PDE \eqref{transp2} must have the form $\sum_{n = 0}^{+\infty} g_n(t) e_n(x)$ where $\{g_n(t)\}_{0 \leq n \leq +\infty}$ are finite for all $t \in [0,+\infty)$. Substituting the selected form of the solution in the perturbation equation \eqref{transp2} and using the eigenproperty $H e_n = \lambda_n e_n$, we obtain the ODEs \eqref{pertcoeffsODE}. Due to assumption \textbf{(A1)} the eigenproperties of the Schr\"odinger operator given in lemmas \ref{LemmaSelfAdj}, \ref{LemmaSpecGap} hold. Using the eigenproperty yields the ODEs \eqref{pertcoeffsODE} with the unique solutions ${g_n}(t) = g_n(0) \re^{-\left(\lambda_n - \frac{c}{\sigma^2 R}\right) t}$. Therefore $\tilde{g}(t,x) = \sum_{n = 0}^{+\infty} g_n(t) e_n(x)$ wherein ${g_n}(t) = g_n(0) \re^{-\left(\lambda_n - \frac{c}{\sigma^2 R}\right) t}$ is the unique $\calH$ solution to the perturbation equation \eqref{transp2}.

{From the Krein-Rutman theorem \cite{todorov2009eigenfunction}, under the assumption $V(x) \geq 0$ given by \textbf{(A2)}, the first eigenvalue is $\frac{c}{\sigma^2 R} = \lambda_0$ so that $\dot{g_0}(t) = 0$ for all $t \geq 0$ and the first eigenfunction is $0 < f^\infty(x) = e_0(x)$ corresponding to the eigenproblem \eqref{transs1s2}. Further, $\tilde{g}(0,x) \in S_0$ implies that $g_0(0) = \left< \tilde{g}(0,x), e_0(x) \right>_\calH = \left< \tilde{g}(0,x), f^\infty(x) \right>_\calH = 0$ implying that $g_0(t) = 0$ for all $t \geq 0$}, which completes the first part of the proof.

We can see that $\lambda_0 > 0$ since the infinite horizon optimal cost $c> 0$ or using integration by parts, $\left< H e_0, e_0 \right>_{L^2(\Rb)} = \left< \frac{V}{\sigma^2 R} e_0, e_0 \right>_{L^2(\Rb)} + \frac{\sigma^2}{2} ||\nabla e_0||^2_{L^2(\Rb)} = \lambda_0 = \frac{c}{\sigma^2 R} > 0$ since $V(x) \geq 0$. From assumption \textbf{(A2)} and $\lambda_0 < \lambda_1 < \cdots$ due to assumption \textbf{(A1)} wherein $\frac{c}{\sigma^2 R} = \lambda_0$, we conclude that $\lambda_n - \frac{c}{\sigma^2 R} > 0$ for all $n > 1$. Using Parseval's identity $||\tilde{g}(t,x)||_{L^2(\Rb)} = \left( \sum_{n = 0}^{+\infty} g_n(t)^2 \right)^\frac{1}{2}$, noting that $g_0(t) = 0$, $g_n(t)^2 = g_n(0)^2 \re^{-2\left(\lambda_n - \frac{c}{\sigma^2 R} \right) t}$ where $\lambda_n - \frac{c}{\sigma^2 R} > 0$ for all $n > 1$ and using the Lebesgue dominated convergence theorem for the limit $t \rightarrow +\infty$, we have that $p^\infty(x)$ is nonlinearly asymptotically stable with respect to $S$.
\end{proof}
From the theorem above, we note that assumption \textbf{(A1)} is the key explicit design constraint on the cost function $q(x)$ and control parameter $R$, which guarantees stability of an initially perturbed density of agents to the corresponding steady state density, under the action of the steady state controller. In figure \ref{fig:Stab} we show stabilization of an initially (perturbed) uniform density of agents to the stationary density corresponding to the steady state controls. The agent dynamics are unstable with the Langevin potential $\nu(x) = -x^3/3$ and the system is stabilized using a cost function $q(x) = (5/2) \cdot x^2$ such that conditions \textbf{(A1, A2)} are satisfied. Equation \eqref{transs1s2} is solved using a spectral solver \cite{driscoll2014chebfun} for the parameters $\sigma = R = 1/2$ and the steady state density is obtained using equation \eqref{stationoptagentLangevin}. Initial states of agents are sampled from a uniform density over the interval $[-2,2]$. Trajectories for $N = 500$ agents are simulated with 100 stochastic realizations each, using the steady state control. In the left panel we observe the density evolve over time steps $t = 0$ (black), $t = T/5$ (blue), $t = T/2$ (pink) to the final time $t = T$ (red) at which the stationary density computed by the spectral solver is recovered.   
\begin{figure}
\centering
	\begin{minipage}{.45\textwidth}
		\centering
		\includegraphics[width=1\linewidth]{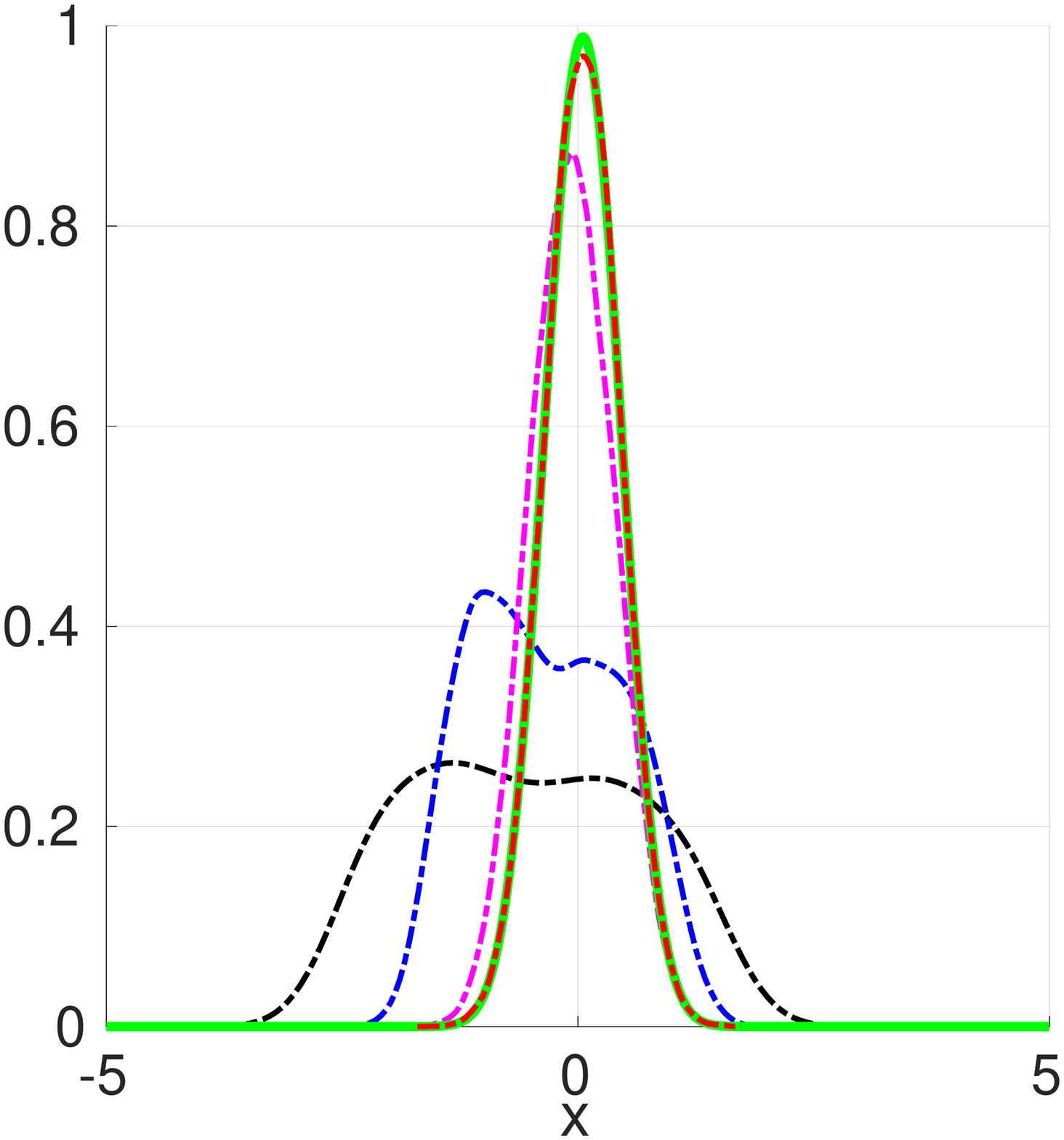}
		\label{fig:test1}
	\end{minipage}%2
	\begin{minipage}{.45\textwidth}
		\centering
		\includegraphics[width=1\linewidth]{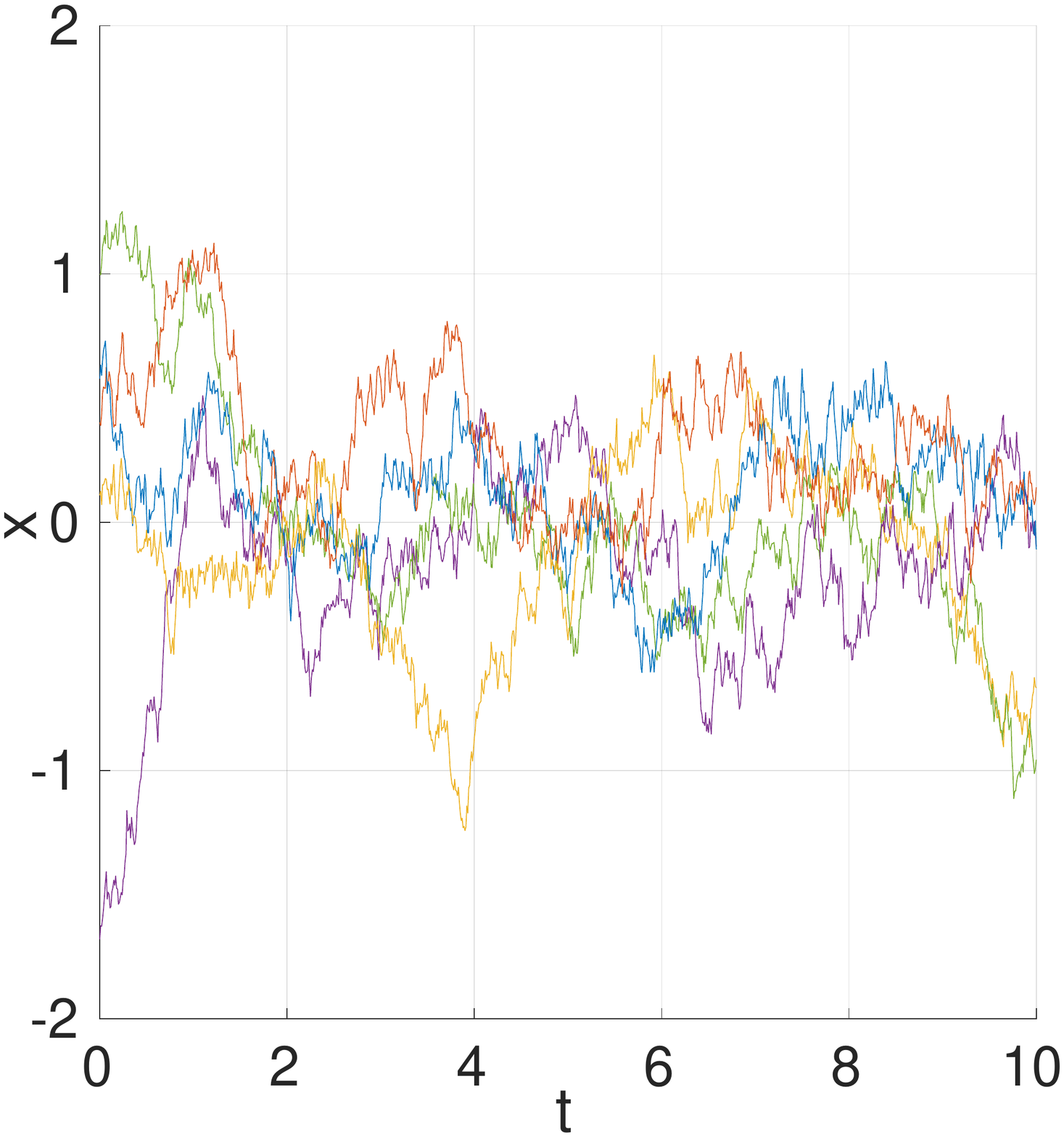}
		\label{fig:test2}
	\end{minipage}
	\caption{Stabilization of a density of unstable Langevin agents with potential $\nu = -x^3/3$  to a equilibrium density by designing an OCP which satisfies the stability constraints \textbf{(A1,A2)}. \textit{Left pane:} Monte-Carlo simulation of density evolution under steady state controls from the perturbed initial density (black, $t = 0$) - blue ($t=T/5$) - pink ($t = T/2$) - terminal (red, $t = T$) density. Note that the terminal (red) density obtained via Monte-Carlo co-incides with the equilibrium density computed by the spectral solver (green). \textit{Right pane:} Stochastic realizations of ten optimally controlled agent trajectories.}
    \label{fig:Stab}
\end{figure}

\section{Control Algorithm}
\label{sec:solver}
For practical applications of the control of large-scale systems, it will be advantageous to pre-compute a finite-time, feedback control law whose domain spans the region of state space that we are interested in. The optimal control can be obtained by solving the corresponding HJB PDE by using finite difference, finite element, spectral or path-integral approaches. In this work, we apply a path-integral approach to solve this PDE in the finite-time case and introduce an efficient quadrature method for evaluating the path-integrals. Although our quadrature method could be applied to either \textbf{(P1)} or \textbf{(P2)}, the implementation becomes computationally efficient in the case of \textbf{(P2)} due to our analytic knowledge of the density for the underlying integrator sampling dynamics, in contrast to to prior path-integral sampling control algorithms (\cite{Kappen2006}, \cite{Theodorou2011IPI}) which require simulation of trajectories of nonlinear dynamics  to numerically approximate the probability density. The result is an efficient method for computing the feedback control law.  

We consider the finite horizon OCP \eqref{finitetimeOCP} with the HJB equation given by \eqref{HJB}, $c = 0$ as explained in remark \ref{finitetime}. The optimal control can then be solved by treating the equivalent problem \textbf{(P2)} with HJB equation \eqref{modHopfColeHJB}. The path-integral representation of this PDE (via Feynman-Kac) is as follows:
\begin{equation}
f(t,\bar{x})=\Eb_\tau\Bigg[\exp\Big(\int_t^T-\frac{V}{\sigma^2R}(x_s)ds\Big)f(T,x_T)\Bigg]
\end{equation}
with the expectation over trajectories $\tau$ of brownian motions over the finite-time horizon $[t,T]$, that is:
\begin{equation}
d x_s = \sigma d\omega_s, \quad x_t=\bar{x}
\end{equation}
First we approximate everything in discrete time with $N$ timesteps of duration $\delta t$, with $\delta t=(T-t)/N$, so that
\begin{equation}
f(t,\bar{x})\approx\Eb_\tau\Bigg[\exp\Big(\sum_{n=0}^{N-1}-\frac{V}{\sigma^2R}(x_n)\delta t\Big)f(T,x_N)\Bigg] \label{discreqn}
\end{equation}
with $x_n$ governed by the discrete dynamical system $
x_{n+1} = x_n + \sigma \sqrt{\delta t}\epsilon$, $ x_0=\bar{x}$, and $\epsilon\sim\vcalN(0,I)$,
with the associated transition probability $p(x_{n+1}|x_n)\sim\vcalN(x_n,\sigma^2\delta t \mathbf{I})$. Let $w_n(x_n) := \exp\Big(-\frac{V}{\sigma^2R}(x_n)\delta t\Big)$ for $n=0,\cdots,N-1$, and let $w_N(x_N) := f(T,x_N)$, and $w := \prod_{n=0}^{N} w_n(x_n)$.

From equation \eqref{discreqn} we have that $f(t,\bar{x})=$
\begin{align}
%&f(t,\bar{x}) = \Eb_\tau\Bigg[\prod_{n=0}^{N} w_n(x_n)\Bigg] \notag \\
%&=\int \Bigg[\prod_{n=0}^N w_n(x_n) \Bigg]p(\tau|x_0=\bar{x}) d\tau\\
%&=\int\cdots\int w_N(x_N)\Bigg[\prod_{n=0}^{N-1} w_n(x_n)p(x_{n+1}|x_n)\Bigg] dx_1dx_2\cdots dx_N\\
\begin{split}
&\int\cdots\int w_N(x_N)\Bigg[\prod_{n=2}^{N-1} w_n(x_n)p(x_{n+1}|x_n) \Bigg] \times \\
& \Bigg[\int w_0(\bar{x})p(x_1|x_0=\bar{x})w_1(x_1)p(x_2|x_1)dx_1\Bigg]dx_2\cdots dx_N.\hspace{-10pt}
\label{eq:quad0}
\end{split}
\end{align}

The second integral in the brackets above is approximated by Gaussian quadrature with M grid points $\{\xi_1^i\}_{i=1}^M$ and weights $\alpha_1^i$ as:
\begin{equation}
%\int w_0(\bar{x})p(x_1|x_0=\bar{x})w_1(x_1)p(x_2|x_1)dx_1 \approx \\
\sum_{i=1}^M \underbrace{p(x_2|x_1=\xi_1^i)}_{\phi_1^i(x_2)}\underbrace{\alpha_1^i w_1(x_1=\xi_1^i)}_{\gamma_1^i}\underbrace{w_0(\bar{x})p(x_1=\xi_1^i|x_0=\bar{x})}_{\phi_0^i(\bar{x})}.
\label{eq:quad1}
\end{equation}

\noindent Define the $M$ dimensional vectors $\Phi_1(x_2)$, $\gamma_1$, and $\Phi_0(\bar{x})$ to have elements $\phi_1^i(x_2)$, $\gamma^i_1$, $\phi^i_0(\bar{x})$, respectively and define $\Gamma_1=diag(\gamma_1)$, and write (\ref{eq:quad1}) as a set of vector products $\int w_0(\bar{x})p(x_1|x_0=\bar{x})w_1(x_1)p(x_2|x_1)dx_1 = \Phi_1(x_2)^\intercal \Gamma_1\Phi_0(\bar{x})$.
Recall that $p(x_1=\xi_1^i|x_0=\bar{x})$ is a Gaussian density, so that each element of $\Phi_0(\bar{x})$ is Gaussian weighted by $w_0(\bar{x})$.
Plugging this back into (\ref{eq:quad0}) yields:
\begin{align}
%&=\int\cdots\int w_N(x_N)\Bigg[\prod_{n=2}^{N-1} w_n(x_n)p(x_{n+1}|x_n) \Bigg]\Phi_1(x_2)^\intercal \Gamma_1\Phi_0(\bar{x})dx_2\cdots dx_N\\
\begin{split}
&=\int\cdots\int w_N(x_N)\Bigg[\prod_{n=3}^{N-1} w_n(x_n)p(x_{n+1}|x_n) \Bigg]\\
&\qquad \Bigg[\int w_2(x_2)p(x_3|x_2)\Phi_1(x_2)^\intercal \Gamma_1\Phi_0(\bar{x})dx_2\Bigg] dx_3 \cdots dx_N.\hspace{-20pt}
\label{eq:int1}
\end{split}
\end{align}
Performing another quadrature on the integral within the brackets at points $\{\xi_2^i\}_{i=1}^M$ with weights $\alpha_2^i$ we obtain:
%Take the integral within the brackets in (\ref{eq:int1}) and perform another quadrature, this time at points $\{\xi_2^i\}_{i=1}^M$ and weights $\alpha_2^i$.  We can replace this integral with:
\begin{equation}
\sum_{i=1}^M \underbrace{p(x_3|x_2=\xi_2^i)}_{\phi^i(x_3)}\underbrace{\alpha_2^i w_2(x_2=\xi_2^i)}_{\gamma_2^i}\Phi_1(x_2=\xi_2^i)^\intercal \Gamma_1\Phi_0(\bar{x})
\label{eq:quad2}
\end{equation}
Let $\tilde{\bm{\Phi}}_n$ be an $M\times M$ transition matrix with elements $\{\tilde{\bm{\Phi}}\}_{ij}=p(x_{n+1}=\xi_{n+1}^i|x_n=\xi_n^j)$. Then we can write (\ref{eq:quad2}) as
$\Phi_2(x_3)^\intercal \Gamma_2\tilde{\bm{\Phi}}_1\Gamma_1\Phi_0(\bar{x})$.
Plugging this back into (\ref{eq:int1}), we can perform the nested integrals recursively. At each timestep $x_n$ we use a different quadrature grid, with points $\{\xi_n^i\}_{i=1}^M$ and weights $\alpha_n^i$. The entire integral will therefore be:
\begin{equation}
	f(t,\bar{x})\approx \gamma_N^\intercal\Bigg[\prod_{n=1}^{N-1}(\tilde{\bm{\Phi}}_n\Gamma_n)\Bigg]\Phi_0(\bar{x})
\label{eq:final1}
\end{equation}
where we have used the definitions:
\begin{align}
\gamma_n&=\Big[\{\alpha_iw_n(\xi_n^i)\}_{i=1}^M\Big]^\intercal\\
\Gamma_n&=diag(\gamma_n)\\
\{\tilde{\bm{\Phi}}_n\}_{ij}&=p(x_{n+1}=\xi_{n+1}^i|x_n=\xi_n^j)\\
\phi_0^i(\bar{x})&= w_0(\bar{x})p(x_1=\xi_1^i|x_0=\bar{x})\\
\Phi_0(\bar{x})&=\Big[\{\phi_0^i(\bar{x})\}_{i=1}^M\Big]^\intercal
\end{align}

Since $V(x)$ is time invariant and one chooses the same quadrature grid points at each timestep, $\gamma_n$ and $\tilde{\bm{\Phi}}_n$ are the same for all $n=1,\cdots,N-1$.  So (\ref{eq:final1}) can be simplified to:
\begin{equation}
f(t,\bar{x})\approx \gamma_N^\intercal(\tilde{\bm{\Phi}}\Gamma)^{N-1}\Phi_0(\bar{x}).
\end{equation}

We consider a two dimensional Langevin potential $\nu = 1/2\cos(x_1 x_2)^2 - 1/24(x_1^4+x_2^4)$, with resulting dynamics:
\begin{align*}
\begin{split}
\rd x_1 &= (\cos(x_1 x_2)\sin(x_1 x_2)x_2-1/6x_1^3 + u_1(s))\rd s + \sigma \rd w_1\\
\rd x_2 &= (\cos(x_1 x_2)\sin(x_1 x_2)x_1-1/6x_2^3 + u_2(s))\rd s + \sigma \rd w_2.
\end{split}
\end{align*}

\noindent In Figure \ref{fig:nu} we first plot the potential $\nu$ along with several uncontrolled trajectories of agents initialized at random locations.  The agents collect into 4 stable and attracting equilibria.  We design a cost function $q(x) = \frac{1}{2}Q((x_1-1)^2+(x_2-1)^2)((x_1+1)^2+(x_2+1)^2)$ to encourage the agents to move into two locations at $(-1,-1)$ and $(1,1)$.  We let $R=1, Q=0.1, \sigma=0.2$, $T=4.0s$, and $dt=0.1$.  We solve for $f(t,x)$ at each timestep using our quadrature method with a fixed 2-d Gauss-Hermite grid spanning $[-2,2]$ in both $x_1$ and $x_2$.  We found 20 grid points in each dimension to yield good results.  We then plot the modified value $\hat{v}(x,t)=-\sigma^2/2\log(f(x,t))$.  With this method we are able to find an optimal feedback control law for the entire domain of integration (Figure \ref{fig:nu}, lower four).  Note that we are also able to solve the problem by calculating controls for each agent locally and independently using our quadrature method, modified to use a smaller grid (with width $4\sigma (T-t)/\sqrt{dt}$ in each dimension), centered at the agent's current position.  Unlike with PDE solver-based solutions, we are able to find optimal controls for each agent locally.  This is advantageous when the size of the state space is large and the number of agents is small. The results of the simulation show that early on $(t=1.0s)$, the agents are pushed towards the center of the space.  As time progresses, the agents are controlled towards the goal position at $(1,1)$ and $(-1,-1)$ for $(t=2.0s,3.0s)$.  At the final time, the agents are mainly concentrated around the goal regions $(t=4.0s)$.  The modified value $\hat{v}$ is smallest at the goal state but also has valleys around the four stable equilibria.  We make our code publicly available at \url{https://github.com/ddfan/pi_quadrature}.  On an Intel(R) Core(TM) i7-4980HQ CPU @ 2.80GHz machine, calculating the value function and simulating the agents took 21.7 seconds, using Python and Numpy's linear algebra library.

\begin{figure}
\centering
	\begin{subfigure}
		\centering
	\includegraphics[width=0.45\textwidth]{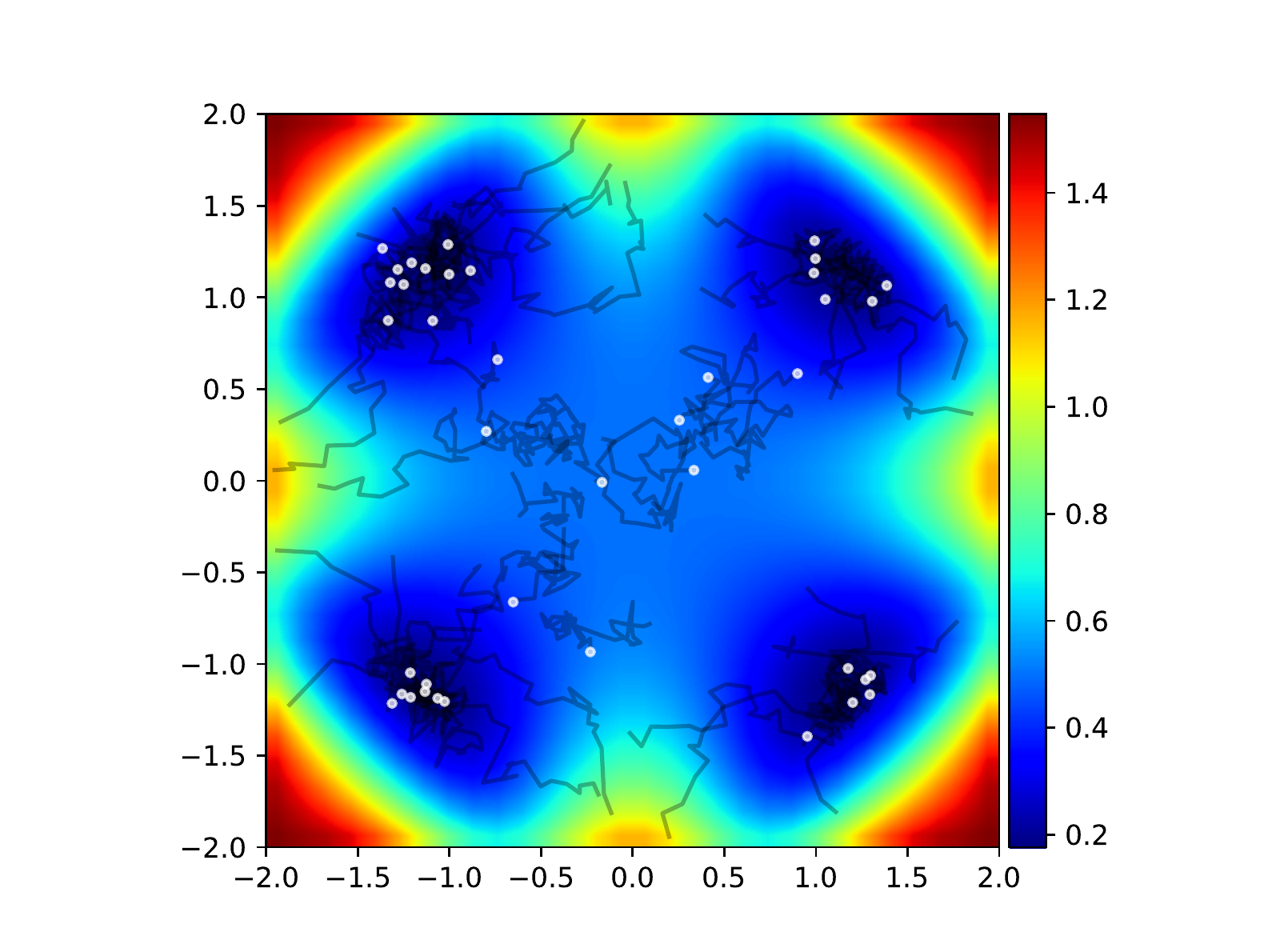}
  \end{subfigure}
%  \vspace{-20pt}
  \begin{subfigure}
	\centering
	\includegraphics[width=0.55\textwidth]{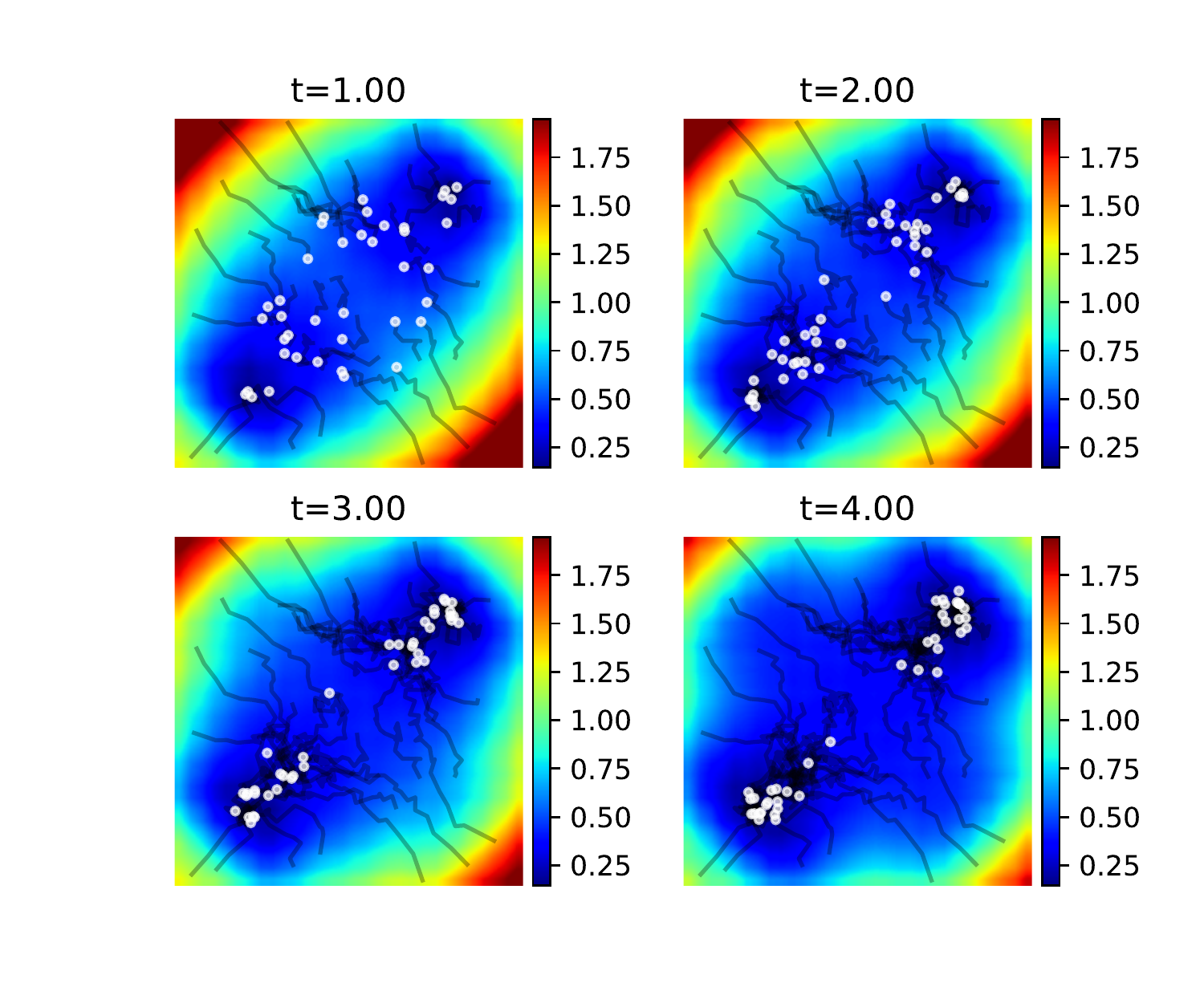}
    \end{subfigure}
	\caption{Top:  Langevin potential $\nu$ for 2 dimensional problem.  Trajectories of 40 agents under no control (black lines) along with their final position after $T=4.0$ seconds (white dots) are plotted.  Lower 4: Optimally controlled agents and value for 2 dimensional finite-horizon problem (T=4.0s).  Color denotes plot of $\hat{v}(x,t)=-\sigma^2/2\log(f(x,t))$.  4 snapshots in time are shown.  Note that the agents move towards the regions of lowest cost at $(-1,-1)$ and $(1,1)$ but are affected by the other potential wells at $(\pm 1,\pm 1)$.}
\label{fig:nu}
\end{figure}

\section{Conclusions}
\label{sec:conclusions}

We have presented an imaginary-time, linear and decoupled Schr\"odinger equation representation of the HJB and FP equations of a class of stochastic OCPs for agents with multi-dimensional \textit{nonlinear Langevin dynamics}. This representation is obtained by introducing novel variable transforms of the value \textit{and} density functions. Both the HJB and FP equations become Schr\"odinger equations with identical potential, in the transformed variables. 

We interpret the Schr\"odinger potential as the cost function of a related OCP with simple integrator dynamics. This motivates a quadrature based algorithm to compute the finite-time control and is demonstrated on a two dimensional example.

The Schr\"odinger representation of the HJB and FP PDEs facilitates a stability analysis of the density without having to relying on linearization of the FP equation at the equilibrium density. The proposed approach provides a framework for closed-loop \textit{nonlinear stability} analysis of the (in general) \textit{non-Gaussian} steady state density, using which, we obtain explicit, analytic \textit{control design} constraints to guarantee stability. It is observed that spectral properties of the Schr\"odinger operator associated with the HJB and FP equations underlie the stability of equilibrium density.

%In \cite{Ullmo2016}, the concept of solitons was used to study MFGs.
The connection between the nonlinear Schr\"odinger equation and MFGs was used in combination with the \textit{theory of solitons} \cite{Ullmo2016} to study reduced order, quaratic-Gaussian approximations of the solutions of MFGs for agents with simple integrator dynamics. A topic of future work is to extend and apply the Schr\"odinger representation introduced in this work, to the case of MFGs for agents with nonlinear Langevin dynamics. The theory of solitons can then be leveraged as in \cite{Ullmo2016} to create a reduced order computational tool for this broader class of MFGs, with the ultimate goal of designing phase transitions (\textit{operating regimes}) in multi-agent networked systems, such as agile swarms \cite{Bauso2018} and electrical micro-grids \cite{Bullo2013}.

Generalization of the presented approach to the case of second order Langevin systems is a natural extension, which is a work currently underway by the authors.

Finally, we will introduce sparse grids \cite{Narayan2018} in the proposed quadrature based finite-time optimal control solver in a forthcoming publication, with the goals of speeding up computation and scaling to high dimensional systems.

\bibliographystyle{unsrt}
\bibliography{References}

\end{document}